\newcommand{\be}{\begin{equation}}
\newcommand{\ee}{\end{equation}}
\newcommand{\beq}{\begin{eqnarray}}
\newcommand{\eeq}{\end{eqnarray}}
\newtheorem{prop}{Proposition}[section]
\newtheorem{remark}[prop]{Remark}
\def\begeq{\begin{equation}}
\def\endeq{\end{equation}}
\def\odot{\setbox0=\hbox{$\bigcirc$}\relax \mathbin {\hbox
to0pt{\raise.5pt\hbox to\wd0{\hfil $\wedge$\hfil}\hss}\box0 }}
\numberwithin{equation} {section}
\numberwithin{equation}{section}
\newtheorem{theorem}{\bf Theorem}[section]
\newtheorem{lemma}[theorem]{\bf Lemma}
\newtheorem{corollary}[theorem]{\bf Corollary}
\begin{document}

\title[an anisotropic inverse mean curvature flow for spacelike graphic curves]
 {An anisotropic inverse mean curvature flow for spacelike graphic curves in Lorentz-Minkowski plane $\mathbb{R}^{2}_{1}$}

\author{
 Ya Gao,~~ Chenyang Liu,~~ Jing Mao$^{\ast}$}

\address{
 Faculty of Mathematics and Statistics, Key Laboratory of
Applied Mathematics of Hubei Province, Hubei University, Wuhan
430062, China. }

\email{1109452431@qq.com, Echo-gaoya@outlook.com, jiner120@163.com}

\thanks{$\ast$ Corresponding author}

\date{}
\begin{abstract}
In this paper, we consider the evolution of spacelike graphic curves
defined over a piece of hyperbola $\mathscr{H}^{1}(1)$, of center at
origin and radius $1$, in the $2$-dimensional Lorentz-Minkowski
plane $\mathbb{R}^{2}_{1}$ along an anisotropic inverse mean
curvature flow with the vanishing Neumann boundary condition, and
prove that this flow exists for all the time. Moreover, we can show
that, after suitable rescaling, the evolving spacelike graphic
curves converge smoothly to a piece of hyperbola of center at origin
and prescribed radius, which actually corresponds to a constant
function defined over the piece of $\mathscr{H}^{1}(1)$, as time
tends to infinity.
\end{abstract}

\maketitle {\it \small{{\bf Keywords}: Anisotropic inverse mean
curvature flow, spacelike curves, Lorentz-Minkowski space, Neumann
boundary condition.}

{{\bf MSC 2020}: Primary 53E10, Secondary 35K10.}}

\section{Introduction}

Throughout this paper, let $\mathbb{R}^{2}_{1}$ be the
$2$-dimensional Lorentz-Minkowski space with the following
Lorentzian metric
\begin{eqnarray*}
\langle\cdot ,\cdot\rangle_{L}=dx_{1}^{2}-dx_{2}^{2}.
\end{eqnarray*}
In fact, $\mathbb{R}^{2}_{1}$ is an 2-dimensional Lorentz manifold
with index 1. Denote by
\begin{eqnarray*}
\mathscr{H}^{1}(1)=\{(x_{1},x_{2})\in\mathbb{R}^{2}_{1}\mid
x_{1}^{2}-x_{2}^{2}=-1 \ \mathrm{and}\  x_{2}>0\},
\end{eqnarray*}
which is exactly the hyperbola of center $(0,0)$ (i.e., the origin
of $\mathbb{R}^{2}$) and radius $1$ in $\mathbb{R}^{2}_{1}$.
Clearly, from the Euclidean viewpoint, $\mathscr{H}^{1}(1)$ is one
component of a hyperbola of two arms.

In this paper, we consider the evolution of spacelike curves
(contained in a prescribed convex sector domain) along an
anisotropic inverse mean curvature flow (IMCF for short), and can
prove the following main conclusion.

\begin{theorem}\label{main1.1}
Let $\alpha < 0$, $M\subset\mathscr{H}^{1}(1)$ be some convex curve
segment of the hyperbola $\mathscr{H}^{1}(1)\subset
\mathbb{R}^{2}_{1}$, and
$\Sigma:=\{rx\in\mathbb{R}^{2}_{1}|r>0,x\in\partial M\}$. Let
$X_{0}:M\rightarrow \mathbb{R}^{2}_{1}$ such that $M_{0}:=X_{0}(M)$
is a compact, strictly convex spacelike $C^{2,\gamma}$-curve
($0<\gamma<1$) which can be written as a graph over $M$. Assume that
\begin{eqnarray*}
M_{0}={\rm graph}_{M}u_{0}
\end{eqnarray*}
is a graph over $M$ for a positive map
$u_{0}:M\rightarrow\mathbb{R}$ and
\begin{eqnarray*}
\partial M_{0}\subset\Sigma,\qquad\left<\mu\circ X_{0},\nu_{0}\circ X_{0}\right>_{L}|_{\partial M}=0,
\end{eqnarray*}
where $\nu_{0}$ is the past-directed timelike unit normal vector of
$M_{0}$, $\mu$ is a spacelike vector field defined along
$\Sigma\cap\partial M=\partial M$ satisfying the following property:
\begin{itemize}
\item For any $x\in\partial M$, $\mu(x)\in T_{x}M$, and moreover, $\mu(x)=\mu(rx)$.
\end{itemize}
Then we have:

(i) There exists a family of strictly convex spacelike curves
$M_{t}$ given by the unique embedding
\begin{eqnarray*}
X\in C^{2+\gamma,1+\frac{\gamma}{2}}(M\times
[0,\infty),\mathbb{R}^{2}_{1})\cap C^{\infty}(M\times
(0,\infty),\mathbb{R}^{2}_{1})
\end{eqnarray*}
with $X(\partial M,t)\subset \Sigma$ for $t\geq 0$, satisfying the
following system
\begin{equation}\label{Eq1}
\left\{
\begin{aligned}
&\frac{\partial}{\partial t}X=\frac{1}{|X|^{\alpha}k}\nu \qquad
&&~\mathrm{in}~
M\times(0,\infty)\\
&\left<\mu\circ X,\nu\circ X\right>_{L}=0  \qquad&&~\mathrm{on}~ \partial M\times(0,\infty)\\
&X(\cdot,0)=M_{0} \qquad &&~\mathrm{in}~M
\end{aligned}
\right.
\end{equation}
where $k$ is the curvature of $M_{t}:= X(M,t)=X_{t}(M)$, $\nu$ is
the past-directed timelike unit normal vector of $M_{t}$, and
$|X|:=|\left<X,X\right>_{L}|^{\frac{1}{2}}$ is the norm of $X$
induced by the Lorentzian metric of $\mathbb{R}^{2}_{1}$.


(ii) The leaves $M_{t}$ are spacelike graphs over $M$, i.e.,
\begin{eqnarray*}
M_{t}={\rm graph}_{M}u(x,t).
\end{eqnarray*}

(iii)  Moreover, the evolving spacelike curves converge smoothly
after rescaling to a piece of $\mathscr{H}^{1}(r_{\infty})$, where
$r_{\infty}$ satisfies
\begin{eqnarray} \label{radius}
\frac{1}{\mathop{{\rm sup}}\limits_{M}
u_{0}}\cdot\frac{\mathcal{L}(M_{0})}{\mathcal{L}(M)} \leq r_{\infty}
\leq \frac{1}{\mathop{{\rm inf}}\limits_{M}
u_{0}}\cdot\frac{\mathcal{L}(M_{0})}{\mathcal{L}(M)},
\end{eqnarray}
where
$\mathscr{H}^{1}(r_{\infty}):=\{r_{\infty}x\in\mathbb{R}^{2}_{1}|x\in\mathscr{H}^{1}(1)\}$,
$\mathcal{L}(M)$ and $\mathcal{L}(M_{0})$ stand for the length of
spacelike curves $M$, $M_0$ respectively.
\end{theorem}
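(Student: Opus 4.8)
The strategy is the standard one for graphical curvature flows with a Neumann boundary condition: reduce the geometric flow \eqref{Eq1} to a scalar parabolic PDE for the graph function $u$ over $M$, establish a priori $C^0$, $C^1$ and $C^2$ estimates uniform in $t$, invoke Krylov--Safonov and Schauder theory to upgrade to $C^\infty$ and obtain long-time existence, and finally analyze the rescaled flow to extract the asymptotic limit. I would set up coordinates on the hyperbola $\mathscr H^1(1)$ using the natural parametrization (hyperbolic angle), express the spacelike graph $M_t = \mathrm{graph}_M u(\cdot,t)$ in the polar-type coordinates $(r,x)$ adapted to the cone $\Sigma$, and compute that the induced metric, the timelike unit normal $\nu$, the curvature $k$, and the support-type function $\langle X,\nu\rangle_L$ all take explicit forms in terms of $u$, $u'$ and $u''$. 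The key point here is that $M\subset\mathscr H^1(1)$ is convex and contained in a convex sector, so the graph stays spacelike precisely when a gradient quantity $v := (1 - |Du|^2/u^2)^{-1/2}$ (or its Lorentzian analogue) stays bounded; the flow equation \eqref{Eq1} becomes $\partial_t u = (\text{something like } u^{1-\alpha} v / \text{second-order operator})$, a quasilinear parabolic equation, with the boundary condition $\langle\mu\circ X,\nu\circ X\rangle_L = 0$ translating into a homogeneous Neumann condition $\partial_\mu u = 0$ on $\partial M$ because of the assumed invariance $\mu(x)=\mu(rx)$ and $\mu(x)\in T_xM$.

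The estimates I would carry out in order: (1) $C^0$ bounds — use the maximum principle on the scalar equation; since $\alpha<0$ the factor $|X|^{-\alpha}$ in the speed provides the right monotonicity so that $\sup u$ and $\inf u$ evolve in a controlled way, giving $\inf_M u_0 \cdot e^{ct} \lesssim u(\cdot,t) \lesssim \sup_M u_0 \cdot e^{ct}$ for an appropriate constant $c$ determined by the geometry of $M$; this already foreshadows the rescaling factor and the bounds in \eqref{radius}. (2) $C^1$ / gradient estimates — differentiate the equation, use the Neumann condition together with convexity of $M$ (so the boundary term in the maximum-principle argument has the good sign), and deduce that the spacelike gradient quantity $v$ stays uniformly bounded, hence the flow remains uniformly spacelike and the equation uniformly parabolic. (3) $C^2$ estimates — here, because we are in the curve case, the only second-order quantity is the curvature $k$; I would derive the evolution equation for $k$ (or for $\log k$, or for $1/(|X|^\alpha k)$, the speed itself), apply the maximum principle with careful treatment of the boundary term using the Neumann condition and the convexity of $\Sigma$, and show $k$ is bounded above and below away from zero, preserving strict convexity. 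With uniform parabolicity and $C^2$ bounds, Krylov--Safonov gives $C^{2,\beta}$, Schauder gives $C^{2+\gamma,1+\gamma/2}$ up to the boundary and $C^\infty$ in the interior, and a standard continuation argument yields existence for all $t\in[0,\infty)$ — this proves (i) and (ii).

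For part (iii), I would pass to the rescaled curves $\widetilde M_t := e^{-ct} M_t$ (with $c$ the constant from the $C^0$ estimate), equivalently the rescaled graph function $\widetilde u := e^{-ct} u$, which then satisfies a normalized flow for which the $C^0$ bounds are uniform (no exponential growth) and the oscillation $\mathrm{osc}_M \widetilde u$ is controlled. The goal is to show $\mathrm{osc}_M \widetilde u(\cdot,t)\to 0$, i.e., $\widetilde u$ converges to a constant $r_\infty$; I would do this via an integral/energy estimate — differentiating a monotone quantity such as $\int_M (\widetilde u - \bar{\widetilde u})^2$ or a length-type functional — showing its time-derivative is negative and integrable, which forces convergence, and then bootstrapping with the higher-order estimates to get smooth (subsequential, then full by uniqueness of the limit) convergence to the constant function $r_\infty$, i.e., to a piece of $\mathscr H^1(r_\infty)$. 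The bounds \eqref{radius} on $r_\infty$ follow by comparing the conserved/monotone quantity at $t=0$ and $t=\infty$: the ratio $\mathcal L(M_0)/\mathcal L(M)$ is essentially the "average" of $u_0$ in the relevant sense, trapped between $\inf u_0$ and $\sup u_0$. The main obstacle I anticipate is the $C^2$ (curvature) estimate with the Neumann boundary condition: controlling the sign of the boundary term when applying the maximum principle to the evolution equation for $k$ requires exploiting both the convexity of the curve segment $M$ and the precise compatibility of $\mu$ with the cone structure, and this is where the Lorentzian signature and the anisotropic factor $|X|^{-\alpha}$ (with $\alpha<0$) must be handled delicately — a sign error there would break the whole argument.
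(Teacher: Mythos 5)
Your overall architecture matches the paper's: reduce to a scalar Neumann parabolic problem for the graph function, establish $C^{0}$/$C^{1}$/curvature bounds, pass to H\"older and Schauder estimates for long-time existence, then analyze the rescaled flow. But there are two substantive places where your plan diverges from (or misrepresents) what the anisotropic structure forces.

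First, the rescaling you propose, $\widetilde u = e^{-ct}u$, is the correct normalization only for $\alpha=0$. For $\alpha<0$ the $C^{0}$ barrier is the ODE solution $\varphi(t)=\frac{1}{\alpha}\ln(-\alpha t+e^{\alpha c})$, so the correct scale factor is $\Theta(t)=(-\alpha t+e^{\alpha c})^{1/\alpha}$, a power law in $t$, and the paper rescales by $\Theta^{-1}$ and reparametrizes time by $dt/ds=\Theta^{\alpha}$. An exponential rescaling would not render the normalized equation autonomous-in-scale, and the subsequent H\"older/energy estimates, which the paper carries out on the rescaled quantities $\widetilde u=u\Theta^{-1}$, $\widetilde k=k\Theta$, $\widetilde w=\Theta^{\alpha}\Psi$, would not close. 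This is precisely the feature that the anisotropic factor $|X|^{-\alpha}$ introduces, and it is not a minor bookkeeping point.

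Second, you propose to get curvature bounds by deriving the evolution equation of $k$ and running a maximum-principle argument at the boundary, worrying (rightly, in that approach) about the sign of the boundary term. The paper avoids this entirely: after establishing the $\dot\varphi$-estimate (by differentiating the scalar equation in $t$ and applying the maximum principle to $\mathcal M=\dot\varphi\,\Theta^{\alpha}$) and the gradient estimate, the two-sided bound $0<c_{3}\le k\Theta\le c_{4}$ is read off algebraically from $\dot u=-v/(u^{\alpha}k)$, since $v$, $u\Theta^{-1}$ and $\dot\varphi\Theta^{\alpha}$ are all pinched. So there is no separate $C^{2}$ maximum-principle argument on $k$, and the delicate boundary-sign analysis you flag as the ``main obstacle'' does not actually arise in the paper's route. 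The genuinely technical part is instead the passage to $C^{2,\beta}$ regularity: the paper rewrites the equation in divergence form (an elliptic Neumann problem for $D\widetilde\varphi$ at fixed $s$, and a divergence-form parabolic equation for the rescaled speed $\widetilde w$) and applies De Giorgi--Nash--Moser / De Giorgi-class arguments rather than Krylov--Safonov; either works here, but your write-up understates that this H\"older step, not the pointwise curvature bound, is where the real work is. Your convergence idea (energy decay of an oscillation-type functional) is a legitimate alternative to the paper's use of the first variation of length to bound $\mathcal{H}^{1}(\widetilde M_{s})$ uniformly, which together with the interior $C^{\infty}$ bounds and Arzel\`a--Ascoli gives convergence to a constant $r_{\infty}$ and the two-sided estimate \eqref{radius}.
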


\begin{remark} \label{remark1.1}
{\rm (1) This work was firstly announced by us in a previous work
\cite{gm1}, which actually corresponds to the higher dimensional
case of the work here. Besides, we also mentioned this work in
series works \cite{glm-1,gm3} later. As pointed out in \cite[Section
1]{gm3}, our work here can be seen as the anisotropic version of the
lower dimensional case of the IMCF in \cite{GaoY2} and the inverse
Gauss curvature flow (IGCF for short) in \cite{gm3} simultaneously.
Of course, the IMCF in \cite{GaoY2} and the IGCF in \cite{gm3}
should be imposed the zero Neumann boundary condition (NBC for
short). Based on this fact and the experience on the study of the
anisotropic IMCF with zero NBC in the $(n+1)$-dimensional
Lorentz-Minkowski space $\mathbb{R}^{n+1}_{1}$ (both the lower
dimensional and the higher dimensional cases), we
proposed\footnote{~Using a similar analytical technique introduced
in \cite{gm3}, the long-time existence and related asymptotical
behavior of the anisotropic version of the IGCF with zero NBC in
$\mathbb{R}^{n+1}_{1}$ ($n\geq2$) can be expected. As we said in
\cite[Remark 1.1]{gm3}, we left this as an exercise for readers who
are interested in this topic.} in \cite[Remark 1.1]{gm3} that one
can also consider the anisotropic version of the IGCF with zero NBC
in $\mathbb{R}^{n+1}_{1}$, $n\geq2$.\\
 (2) As explained clearly and mentioned in \cite[Remark 1.1]{glm-1},
 we prefer to use L\'{o}pez's setting introduced in \cite{rl} to
 deal with the geometric quantities of spacelike curves in
 $\mathbb{R}^{2}_{1}$ for the purpose of convenience. One can have a
 glance at this setting through the computation of curvature of spacelike graphic curves in
 $\mathbb{R}^{2}_{1}$ (defined over $M\subset\mathscr{H}^{1}(1)$) shown in the proof of Lemma \ref{lemma2-1}
 below.
 \\
 (3) In fact, in Theorem \ref{main1.1}, $M$ is some $convex$ curve segment of the spacelike curve
$\mathscr{H}^{1}(1)$ implies that the curvature of $M$ is positive
everywhere w.r.t. the vector field $\mu$ (provided
its direction is suitably chosen). \\
 (4) Of course, the notation $T_{x}M$ means the tangent
 space at the point $x\in \partial M$, which in the situation here is
 a  $1$-dimensional vector space diffeomorphic to the Euclidean
 $1$-space $\mathbb{R}$. In order to clearly comprehend the property (for the
 vector field $\mu$)
 required in Theorem \ref{main1.1}, we suggest readers to check the
 $3^{rd}$ footnote given in our previous work \cite{GaoY2}, where
 the detailed explanation can be found. \\
 (5)  It is easy to check that
all the arguments in the sequel are still valid for the case
$\alpha=0$ except some minor changes should be made. For instance,
if $\alpha=0$, then the expression (\ref{blow}) below becomes
$\varphi(t)=-t+c$. However, in this setting, one can also get the
$C^0$ estimate as well. Therefore, Theorem \ref{main1.1} also holds
for $\alpha=0$, and in this situation, the homogeneous anisotropic
factor $|X|^{-\alpha}$ equals $1$, and consequently the flow in
Theorem \ref{main1.1} degenerates into the classical IMCF with zero
NBC in $\mathbb{R}^{2}_{1}$. This phenomenon also happens in the
higher dimensional case -- see \cite[Remark 1.1]{gm1} for details. \\
 (6) As shown in Lemma \ref{lemma2-1} below, one can use a
 single parameter $\xi$ to describe any point $x\in
 M\subset\mathscr{H}^{1}(1)$, and moreover, under this
 parametrization, the component of the Riemannian metric on $\mathscr{H}^{1}(1)$ is $\sigma_{\xi\xi} =
g_{\mathscr{H}^{1}(1)}\left(\partial_{\xi} ,
\partial_{\xi}\right)=1$. Without loss of generality, there should
exist an interval $[c,d]$ such that $x(\xi)$, $\xi\in[c,d]$, runs
over the whole curve segment $M$ and $\Sigma\cap\partial M=\{x(c)$,
$x(d)\}$. Furthermore, the lengths $\mathcal{L}(M)$ and
$\mathcal{L}(M_0)$ can be computed  as follows:
\begin{eqnarray*}
\mathcal{L}(M)=\int_{M}d\mathcal{H}^1=\int_{c}^{d}d\xi=d-c,
\end{eqnarray*}
 with $\mathcal{H}^{1}(\cdot)$ the $1$-dimensional Hausdorff measure
 of a prescribed Riemannian curve, and
\begin{eqnarray*}
\mathcal{L}(M_0)=\int_{M_0}d\mathcal{H}^1=\int_{c}^{d}
\sqrt{u_{0}^{2}(\xi)-|Du_{0}(\xi)|^{2}}d\xi,
\end{eqnarray*}
 where as in Lemma \ref{lemma2-1},  $D$ is the covariant connection on $\mathscr{H}^{1}(1)$.
Then the estimate for the radius $r_{\infty}$ in (iii) of Theorem
\ref{main1.1} becomes
\begin{eqnarray*}
\frac{1}{\mathop{{\rm sup}}\limits_{M} u_{0}}\cdot\frac{\int_{c}^{d}
\sqrt{u_{0}^{2}(\xi)-|Du_{0}(\xi)|^{2}}d\xi}{d-c} \leq r_{\infty}
\leq \frac{1}{\mathop{{\rm inf}}\limits_{M}
u_{0}}\cdot\frac{\int_{c}^{d}
\sqrt{u_{0}^{2}(\xi)-|Du_{0}(\xi)|^{2}}d\xi}{d-c}.
\end{eqnarray*}
BTW, the formula for the component of the induced metric on $M_0$
(see (ii) of Lemma \ref{lemma2-1}) and the length formula of curves
have been used directly in the computation of $\mathcal{L}(M)$ and
$\mathcal{L}(M_0)$.
 }
\end{remark}

This paper is organized as follows. In Section \ref{se2}, we will
prove the short-time existence of the flow discussed in Theorem
\ref{main1.1} (i.e., the IMCF with zero NBC in
$\mathbb{R}^{2}_{1}$). In Section \ref{se3}, several estimates,
including $C^0$, time-derivative and gradient estimates, of
solutions to the flow equation will be shown in details. Estimates
of higher-order derivatives of solutions to the flow equation, which
naturally leads to the long-time existence of the flow, will be
investigated in Section \ref{se4}. In the end, we will clearly show
the convergence of the rescaled flow in Section \ref{se6}.

\section{The scalar version of the flow equation} \label{se2}

Since the spacelike $C^{2,\gamma}$-curve $M_{0}$ can be written as a
graph of $M \subset \mathscr{H}^{1}(1)$, there exists a function
$u_{0} \in C^{2,\gamma} (M)$ such that $X_{0} : M \rightarrow
\mathbb{R}^{2}_{1}$ has the form $x\mapsto G_{0} := (x,u_{0}(x)).$
The curve $M_{t}$ given by the embedding
\begin{eqnarray*}
X(\cdot , t) : M \rightarrow \mathbb{R}^{2}_{1}
\end{eqnarray*}
at time $t$ may be represented as a graph over $M \subset
\mathscr{H}^{1}(1)$, and then we can make ansatz
\begin{eqnarray*}
 X(x , t) = \left(x,u(x,t)\right)
\end{eqnarray*}
for some function $u : M \times [0,T) \rightarrow \mathbb{R}.$ The
following formulae are needed.

\begin{lemma}\label{lemma2-1}
Define $p := X(x,t)$ and assume that a point on $\mathscr{H}^{1}(1)$
is parameterized by the coordinate $\xi$, that is, $x=x(\xi)$. By
the abuse of notations, let $\partial_{\xi}$ be the corresponding
coordinate field on $\mathscr{H}^{1}(1)$ and $\sigma_{\xi\xi} =
g_{\mathscr{H}^{1}(1)}\left(\partial_{\xi} ,
\partial_{\xi}\right)=1$ be the Riemannian metric on
$\mathscr{H}^{1}(1)$. Denote by $u_{\xi}:=D_{\partial_{\xi}}u,$ and
$u_{\xi\xi}:=D_{\partial_{\xi}}D_{\partial_{\xi}}u$ the covariant
derivatives of u w.r.t. the metric $g_{\mathscr{H}^{1}(1)},$ where D
is the covariant connection on $\mathscr{H}^{1}(1)$. Let $\nabla$ be
the Levi-Civita connection of $M_{t}$ w.r.t. the metric
$g:=u^{2}g_{\mathscr{H}^{1}(1)} - dr^{2}$ induced from the
Lorentzian metric $\langle\cdot,\cdot\rangle_{L}$ of
$\mathbb{R}^{2}_{1}$. The following formulae hold:

(i) The tangential vector on $M_{t}$ is
\begin{eqnarray*}
X_{\xi} = \partial _{\xi} + u_{\xi}\partial _{r},
\end{eqnarray*}
and the corresponding past-directed timelike unit normal vector is
given by
\begin{eqnarray*}
\nu = \displaystyle{-\frac{1}{v}}\displaystyle{\left(
\frac{u_{\xi}}{u^{2}}\partial_{\xi} + \partial_{r}\right)},
\end{eqnarray*}
where $u^{\xi}=\sigma^{\xi\xi} u_{\xi}=u_{\xi}$,
$|Du|^{2}=u_{\xi}u^{\xi}=|u_{\xi}|^{2}$, and $v=\sqrt{1 -
u^{-2}\left|Du\right|^{2}}.$

(ii) The induced metric g on $M_{t}$ has the form
\begin{eqnarray*}
g_{\xi\xi} = u^{2}\sigma_{\xi\xi}-u_{\xi}^{2} = u^{2}-u_{\xi}^{2},
\end{eqnarray*}
and its inverse is given by
\begin{eqnarray*}
g^{\xi\xi} =
\frac{1}{u^{2}}\left(\sigma^{\xi\xi}+\frac{u_{\xi}^{2}}{u^{2}v^{2}}\right)
= \frac{1}{u^{2}}\left(1+\frac{u_{\xi}^{2}}{u^{2}v^{2}}\right).
\end{eqnarray*}

(iii) The curvature of $M_{t}$ is given by
\begin{eqnarray*}
k=\frac{u_{\xi\xi}u + u^{2} -2u^{2}_{\xi}}{v^{3}u^{3}}.
\end{eqnarray*}

(iv) Let $p = X(x , t)\in \Sigma $ with $x \in \partial M$,
$\hat\mu(p) \in T_{p}M_{t}$, $\mu = \mu ^{\xi}(x)\partial_{\xi}(x)$
at $x$, with $\partial_{\xi}$ the basis vector of $T_{x}M$. Then
\begin{eqnarray*}
\langle \hat\mu(p) ,\nu(p) \rangle_{L} = 0 \Leftrightarrow \mu
^{\xi}(x)u_{\xi}(x,t) = 0.
\end{eqnarray*}
\end{lemma}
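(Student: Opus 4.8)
The plan is to fix an explicit parametrization of $\mathscr{H}^{1}(1)$ and pass to the associated Lorentzian ``polar'' coordinates adapted to it, in which all four assertions become short, mechanical computations. Concretely, I would take $x(\xi)=(\sinh\xi,\cosh\xi)$, so that $\langle x,x\rangle_{L}=-1$, $\langle x,x'\rangle_{L}=0$ and $\langle x',x'\rangle_{L}=1$ (the last identity being exactly $\sigma_{\xi\xi}=1$), and then set $p=r\,x(\xi)$ on the convex sector under consideration. Differentiating, the coordinate frame $\{\partial_{\xi},\partial_{r}\}$ is orthogonal and satisfies
\[
\langle\partial_{\xi},\partial_{\xi}\rangle_{L}=r^{2},\qquad \langle\partial_{r},\partial_{r}\rangle_{L}=-1,\qquad \langle\partial_{\xi},\partial_{r}\rangle_{L}=0,
\]
so that $\langle\cdot,\cdot\rangle_{L}=-\mathrm{d}r^{2}+r^{2}\,\mathrm{d}\xi^{2}$ on this region and, in particular, $\partial_{r}=x(\xi)$ is future-directed. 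Items (i)--(iv) are then obtained by bookkeeping in this frame.

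For (i), since $M_{t}=\{r=u(\xi,t)\}$ the embedding has coordinates $(\xi,u(\xi,t))$, whence $X_{\xi}=\partial_{\xi}+u_{\xi}\partial_{r}$. Writing $\nu=a\,\partial_{\xi}+b\,\partial_{r}$, the requirements $\langle\nu,X_{\xi}\rangle_{L}=0$ and $\langle\nu,\nu\rangle_{L}=-1$ force $a/b=u_{\xi}/u^{2}$ and $b=\pm v^{-1}$ with $v=\sqrt{1-u^{-2}|Du|^{2}}$, and ``past-directed'' pins down $b=-v^{-1}$, which gives the stated $\nu$. For (ii), $g_{\xi\xi}=\langle X_{\xi},X_{\xi}\rangle_{L}=u^{2}-u_{\xi}^{2}=u^{2}\sigma_{\xi\xi}-u_{\xi}^{2}$; inverting this and using the elementary identity $u^{2}v^{2}=u^{2}-u_{\xi}^{2}$ rewrites $g^{\xi\xi}=(u^{2}-u_{\xi}^{2})^{-1}$ in the displayed form $u^{-2}(1+u_{\xi}^{2}/(u^{2}v^{2}))$.

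For (iii), I would record the ambient Christoffel symbols of $-\mathrm{d}r^{2}+r^{2}\,\mathrm{d}\xi^{2}$, of which only $\Gamma^{r}_{\xi\xi}=r$ and $\Gamma^{\xi}_{r\xi}=\Gamma^{\xi}_{\xi r}=1/r$ are nonzero, and then compute, along $M_{t}$, that $\bar\nabla_{X_{\xi}}X_{\xi}=\frac{2u_{\xi}}{u}\,\partial_{\xi}+(u_{\xi\xi}+u)\,\partial_{r}$. Pairing with $\nu$ from (i) gives $\langle\bar\nabla_{X_{\xi}}X_{\xi},\nu\rangle_{L}=\frac{1}{uv}(uu_{\xi\xi}+u^{2}-2u_{\xi}^{2})$, and then $k=g^{\xi\xi}\langle\bar\nabla_{X_{\xi}}X_{\xi},\nu\rangle_{L}$ together with $g^{\xi\xi}=(u^{2}v^{2})^{-1}$ yields $k=(uu_{\xi\xi}+u^{2}-2u_{\xi}^{2})/(u^{3}v^{3})$; here the sign convention for $k$ is the one under which a piece of $\mathscr{H}^{1}(r_{0})$ is positively curved with respect to the past-directed normal (indeed one gets $k=1/r_{0}$ in that case). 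For (iv), $\mu=\mu^{\xi}(x)\partial_{\xi}(x)$ is given at $x\in\partial M$, where $r=1$; realizing it as an ambient vector at $p=X(x,t)$, under which $\partial_{\xi}(x)=x'(\xi)$ becomes $u(x,t)^{-1}\partial_{\xi}|_{p}$ because $\partial_{\xi}|_{p}=u(x,t)\,x'(\xi)$, and pairing $\hat\mu(p)$ with $\nu(p)$ from (i), only the term involving $\langle\partial_{\xi},\partial_{\xi}\rangle_{L}$ survives and one finds $\langle\hat\mu(p),\nu(p)\rangle_{L}$ equal to $\mu^{\xi}(x)\,u_{\xi}(x,t)$ up to the nonvanishing factor $-1/(uv)$, which is precisely the claimed equivalence.

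None of this is genuinely difficult, since the computations are explicit and short. The points that require care are keeping the adapted polar frame consistent throughout, fixing the two sign conventions correctly --- the ``past-directed'' normalization of $\nu$ and the sign in the scalar second fundamental form that makes $k>0$ on convex pieces --- and remembering that the spacelike hypothesis guarantees $v>0$ (equivalently $|Du|<u$), so that every division performed above is legitimate. The small algebraic fact $u^{2}v^{2}=u^{2}-u_{\xi}^{2}$ is what reconciles the two displayed forms of $g^{\xi\xi}$ and produces the final shape of $k$.
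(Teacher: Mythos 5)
Your proof is correct, but the route you take for the curvature computation in part (iii) is genuinely different from the paper's. The paper deliberately adopts L\'opez's ``curve-theoretic'' setting: it writes the unit tangent $T=X_\xi/\sqrt{|\langle X_\xi,X_\xi\rangle_L|}$ in ambient Cartesian coordinates, differentiates componentwise to obtain $T_\xi$ and then $T_s=T_\xi\,d\xi/ds$, and defines $k=|T_s|=\sqrt{|\langle T_s,T_s\rangle_L|}$, after which a sign discussion (the footnote about $u u_{\xi\xi}+u^2-2u_\xi^2>0$) pins down the formula. You instead compute the scalar second fundamental form directly: pass to the Lorentzian polar frame $-dr^2+r^2d\xi^2$, record the ambient Christoffel symbols $\Gamma^r_{\xi\xi}=r$, $\Gamma^\xi_{r\xi}=1/r$, compute $\bar\nabla_{X_\xi}X_\xi$, pair with $\nu$, and contract with $g^{\xi\xi}$. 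Both are short and both are valid; your approach is the one that scales to the higher-dimensional spacelike-graph setting (and is presumably what the cited Lemma 3.1 of the companion paper does, which the present paper declines to reproduce), and it produces the sign of $k$ intrinsically rather than via an absolute value followed by a case analysis, whereas the paper's approach emphasizes the Frenet formula $T_s=kN$ for plane curves, which is thematically why they chose it. For parts (i), (ii) and (iv) the paper simply defers to the earlier reference, so your explicit computations there are the expected ones rather than an alternative; note only that the useful identity $u^2v^2=u^2-u_\xi^2$, which you invoke to reconcile $g^{\xi\xi}=(u^2v^2)^{-1}$ with the displayed form $u^{-2}(1+u_\xi^2/(u^2v^2))$ and again in simplifying $k$, is exactly what makes the bookkeeping close, and that your reading of $\hat\mu(p)$ as the radially constant extension of $\mu$ re-expressed in the polar frame at $p$ (picking up the factor $u^{-1}$) is the correct interpretation of the boundary pairing.
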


\begin{proof}
The proof (except the calculation of curvature of spacelike graphic
curves in $\mathbb{R}^{2}_{1}$) is very similar to that of
\cite[Lemma 3.1]{GaoY2}, and we prefer to omit this part.

As mentioned in (2) of Remark \ref{remark1.1}, now, we prefer to use
L\'{o}pez's setting introduced in \cite{rl} to compute the
curvature. Clearly, the tangential vector on $M_{t}$ can be
rewritten as
\begin{eqnarray*}
X_{\xi}=(1,u_{\xi}),
\end{eqnarray*}
the Minkowski arc-length parameter is defined as
\begin{eqnarray*}
ds = \sqrt{|\langle X_{\xi} ,X_{\xi}\rangle_{L}|}d\xi,
\end{eqnarray*}
and the \emph{unit} tangent vector is
\begin{eqnarray*}
T= X_{s}:=\frac{1}{\sqrt{|\langle X_{\xi}
,X_{\xi}\rangle_{L}|}}X_{\xi}.
\end{eqnarray*}
Hence, we have
\begin{eqnarray*}
T:= (T^{1} , T^{2})=\frac{1}{\sqrt{u^{2}-u_{\xi}^{2}}}(1,u_{\xi}),
\end{eqnarray*}
which implies\footnote{~As usual, the comma ``," in subscript of a
given tensor means doing covariant derivatives. We also make an
agreement that, for simplicity, in the sequel the comma ``," in
subscripts will be omitted unless necessary. }
\begin{eqnarray*}
\nabla_{X_{\xi}}T=T_{,X_{\xi}}:=T_{\xi} = T^{1}_{,\xi}\partial_{\xi}
+ T^{2}_{,\xi}\partial_{r},
\end{eqnarray*}
so we get
\begin{eqnarray*}
T_{\xi} =
\left(\frac{uu_{\xi}u_{\xi\xi}+u_{\xi}u^{2}-2u_{\xi}^{3}}{u(u^{2}-u_{\xi}^{2})^{\frac{3}{2}}}
,
\frac{u^{2}u_{\xi\xi}+u^{3}-2u_{\xi}^{2}u}{(u^{2}-u_{\xi}^{2})^{\frac{3}{2}}}\right),
\end{eqnarray*}
and
\begin{eqnarray} \label{add-xx}
T_{s} = T_{\xi}\frac{d\xi}{ds} =
\left(\frac{uu_{\xi}u_{\xi\xi}+u_{\xi}u^{2}-2u_{\xi}^{3}}{u(u^{2}-u_{\xi}^{2})^{2}}
,
\frac{u^{2}u_{\xi\xi}+u^{3}-2u_{\xi}^{2}u}{(u^{2}-u_{\xi}^{2})^{2}}\right).
\end{eqnarray}
Since $T$ is spacelike, the curvature $k$ can be defined as (see
\cite[pp. 14-16]{rl})
\begin{eqnarray*}
k=|T_{s}|=\sqrt{|\langle T_{s},T_{s}\rangle_{L}|},
\end{eqnarray*}
which, together with (\ref{add-xx}), gives
\begin{eqnarray*}
k=\frac{\left|uu_{\xi\xi}+u^{2}-2u_{\xi}^{2}\right|}{u^{3}v^{3}} .
\end{eqnarray*}
Without loss of generality\footnote{~ Clearly, $k\neq0$, which
implies $u_{\xi\xi}u+u^{2}-2u_{\xi}^{2} \neq 0$, otherwise, the IMCF
equation in (\ref{Eq1}) would degenerate. If
$u_{\xi\xi}u+u^{2}-2u_{\xi}^{2} < 0$, one has
$k=\frac{2u_{\xi}^{2}-u_{\xi\xi}u-u^{2}}{u^{3}v^{3}}$, and then one
can choose the future-directed timelike unit normal vector $N$ such
that the flow equation becomes $\frac{\partial X}{\partial
t}=\frac{1}{|X|^{\alpha}k}N$, which is still parabolic. Then similar
argument can be made to get the main conclusions in Theorem
\ref{main1.1}. }, assume that $u_{\xi\xi}+u^{2}-2u_{\xi}^{2}
>0 $, and then
\begin{eqnarray*}
k=\left|T_{s}\right|=\frac{uu_{\xi\xi}+u^{2}-2u_{\xi}^{2}}{u^{3}v^{3}},
\end{eqnarray*}
Choosing the unit normal $N$ as follows
\begin{eqnarray*}
N=\frac{u}{\sqrt{u^{2}-u_{\xi}^{2}}}\left(\frac{u_{\xi}}{u^{2}},
1\right),
\end{eqnarray*}
which is future-directed, and clearly it satisfies
\begin{eqnarray*}
T_{s}=kN.
\end{eqnarray*}
The above equation is actually the Frenet formula for spacelike
curves $M_{t}$ in $\mathbb{R}^{2}_{1}$. The proof is finished.
\end{proof}

Using techniques as in Ecker \cite{Eck} (see also \cite{Ge90, Ge06,
Mar}), the problem \eqref{Eq1} can be reduced to solve the following
scalar equation with the corresponding initial data and the
corresponding NBC
\begin{equation}\label{Eq2}
\left\{
\begin{aligned}
&\frac{\partial u}{\partial t}=-\frac{v}{u^{\alpha}k} \qquad
&&~\mathrm{in}~
M\times(0,\infty)\\
&\nabla_{\mu}u=0  \qquad&&~\mathrm{on}~ \partial M\times(0,\infty)\\
&u(\cdot,0)=u_{0} \qquad &&~\mathrm{in}~M.
\end{aligned}
\right.
\end{equation}
By Lemma \ref{lemma2-1}, define a new function $\varphi(x,t) = \ln
u(x,t)$ and then the curvature can be rewritten as\footnote{~One
might find that our previous works on inverse curvature flows (see,
e.g., \cite{cmtw,glm-1,GaoY2,gm1,gm3,chmw}) have used $\varphi=\log
u$ to represent the logarithmic relation between functions $\varphi$
and $u$. However, we did not prescribe a base for the logarithmic
function $\log u$. This is because of two reasons. First, in most of
non-Chinese mathematical literatures, there is no strict difference
between $y=\log x$ and $e$-base logarithmic function $y=\ln x$, that
is, in those literatures, $y=\log x$ was treated as $y=\ln x$.
Second, in nearly all the analysis process, the non $e$-base
logarithmic function has almost the same function with the $e$-base
logarithmic function. Based on these two reasons, there is not
necessary to give a base for the logarithmic function when doing
changes of variable. Hence, by abuse of notations, we will use
$\log$, $\ln$ simultaneously, and will give a prescribed base
clearly if necessary.
 }
\begin{eqnarray*}
k=\frac{e^{-\varphi}}{v}\left(1+\frac{1}{v^{2}}\varphi_{\xi\xi}\right).
\end{eqnarray*}
Hence, the evolution equation in \eqref{Eq2} can be rewritten as
\begin{eqnarray*}
\frac{\partial}{\partial t}\varphi = -e^{-\alpha \varphi}\left(1 -
\left|D\varphi\right|^{2}\right)\frac{1}{\left[1+\frac{1}{v^{2}}\varphi_{\xi\xi}\right]}
:=Q\left(\varphi ,D\varphi, D^{2}\varphi\right).
\end{eqnarray*}
Thus, the problem (1.1) is again reduced to solve the following
scalar equation with the NBC and the initial data
\begin{equation}\label{Evo-1}
\left\{
\begin{aligned}
&\frac{\partial \varphi}{\partial t}=Q\left(\varphi ,D\varphi,
D^{2}\varphi\right) \qquad &&~\mathrm{in}~
M\times(0,T)\\
&\nabla_{\mu}\varphi=0  \qquad&&~\mathrm{on}~ \partial M\times(0,T)\\
&\varphi(\cdot,0)=\varphi_{0} \qquad &&~\mathrm{in}~M,
\end{aligned}
\right.
\end{equation}
where
\begin{eqnarray*}
\left(1+\frac{1}{v^{2}}\varphi_{0,\xi\xi}\right)
\end{eqnarray*}
is positive on $M$, since $M_{0}$ is convex. Clearly, for the
initial spacelike graphic curve $M_{0}$,
\begin{eqnarray*}
\left.\frac{\partial Q}{\partial \varphi_{\xi\xi}}\right|_{\varphi
_{0}} = \frac{1}{u^{2+\alpha}k^{2}v^{2}}
\end{eqnarray*}
is positive on $M$. Based on the above facts, as in \cite{Ge90,
Ge06, Mar}, we can get the following short-time existence and
uniqueness for the parabolic system \eqref{Eq1}.

\begin{lemma}\label{Lemma2-2}
Let $X_{0}(M) = M_{0}$ be as in Theorem \ref{main1.1}. Then there
exist some $T > 0$, a unique solution $u \in C^{2+\gamma ,
1+\frac{\gamma}{2}}\left(M\times \left[0,T\right]\right) \cap
C^{\infty}\left(M \times \left(0,T\right]\right)$, where $\varphi
(x,t) = \log u(x,t)$, to the parabolic system \eqref{Evo-1} with the
coefficient
\begin{eqnarray*}
\left(1+\frac{1}{v^{2}}\varphi_{\xi\xi}\right)
\end{eqnarray*}
positive on $M$. Thus there exists a unique map $\psi :M \times
[0,T] \rightarrow M$ such that $\psi(\partial M , t) = \partial M$
and the map $\widehat{X}$ defined by
\begin{eqnarray*}
\widehat{X} : M \times [0,T) \rightarrow \mathbb{R}^{2}_{1} :(x,t)
\mapsto X(\psi (x,t),t)
\end{eqnarray*}
has the same regularity as stated in Theorem \ref{main1.1} and is
the unique solution to the parabolic system \eqref{Eq1}.
\end{lemma}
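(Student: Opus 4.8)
The plan is to regard \eqref{Evo-1} as a scalar quasilinear parabolic initial--boundary value problem for $\varphi=\log u$, solve it on a short time interval by standard parabolic theory, and then transfer the solution back to the geometric flow \eqref{Eq1} by a tangential reparametrization.

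First I would check parabolicity near the initial datum. With $v^{2}=1-e^{-2\varphi}|D\varphi|^{2}$ and $Q(\varphi,D\varphi,D^{2}\varphi)=-e^{-\alpha\varphi}\bigl(1-|D\varphi|^{2}\bigr)\bigl[1+v^{-2}\varphi_{\xi\xi}\bigr]^{-1}$, it has already been noted that $\partial Q/\partial\varphi_{\xi\xi}\big|_{\varphi_{0}}=(u^{2+\alpha}k^{2}v^{2})^{-1}$, which is strictly positive and bounded away from $0$ on the compact curve $M$ since $M_{0}$ is strictly convex spacelike ($k>0$, $0<v\le 1$). By continuity $\partial Q/\partial\varphi_{\xi\xi}$ stays bounded below by a positive constant, and $1-|D\varphi|^{2}>0$, for all $\varphi$ in a small $C^{1}$-neighbourhood of $\varphi_{0}$; thus \eqref{Evo-1} is uniformly parabolic there. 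The boundary operator $\varphi\mapsto\nabla_{\mu}\varphi$ is linear and, after normalizing $\mu$, is a uniformly oblique (Neumann-type) first-order operator, because $\mu(x)$ is a nonzero vector in the one-dimensional space $T_{x}M$ at each endpoint of $M$, hence not tangent to $\partial M$ (see the footnote on $\mu$ in \cite{GaoY2}).

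Next, the zeroth-order compatibility condition holds: by the hypotheses of Theorem \ref{main1.1} and Lemma \ref{lemma2-1}(iv) we have $\langle\mu\circ X_{0},\nu_{0}\circ X_{0}\rangle_{L}|_{\partial M}=0\Leftrightarrow\mu^{\xi}u_{0,\xi}=0$ on $\partial M$, i.e.\ $\nabla_{\mu}u_{0}=0$, so $\nabla_{\mu}\varphi_{0}=e^{-\varphi_{0}}\nabla_{\mu}u_{0}=0$. With this compatibility in place I would run the usual argument for quasilinear oblique problems following \cite{Eck,Ge90,Ge06,Mar}: freeze the coefficients at a given $w\in C^{2+\gamma,1+\gamma/2}(M\times[0,T])$, solve the resulting linear oblique parabolic problem by Schauder theory, and show via the a priori Schauder estimates that the solution map $w\mapsto\varphi$ is a contraction on a small ball around $\varphi_{0}$ provided $T$ is small enough; the fixed point is the desired solution $u\in C^{2+\gamma,1+\gamma/2}(M\times[0,T])$, and $1+v^{-2}\varphi_{\xi\xi}>0$ persists on $M\times[0,T]$ after shrinking $T$. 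Interior parabolic regularity together with regularity up to $\partial M$ for $t>0$ (the coefficients and boundary operator being smooth in their arguments) then bootstrap to $\varphi\in C^{\infty}(M\times(0,T])$, and uniqueness follows from the parabolic maximum principle applied to the difference of two solutions.

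Finally, to recover \eqref{Eq1}: for $X(x,t)=(x,u(x,t))$ one computes, as in \cite{Eck}, $\partial_{t}X=\frac{1}{|X|^{\alpha}k}\nu+\zeta$ with $\zeta$ tangential to $M_{t}$; letting $\psi(\cdot,t):M\to M$ be the flow of the pulled-back time-dependent vector field corresponding to $-\zeta$, one checks that on $\partial M$ the Neumann condition forces $\zeta$ to be parallel to $\mu\in T_{x}M$, which by the property $\mu(x)=\mu(rx)$ is tangent to $\Sigma$; hence $\psi(\partial M,t)=\partial M$, and $\widehat X(x,t):=X(\psi(x,t),t)$ solves \eqref{Eq1}, has the regularity asserted in Theorem \ref{main1.1}, and keeps $\widehat X(\partial M,t)\subset\Sigma$. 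Uniqueness for \eqref{Eq1} reduces to that for \eqref{Evo-1}, since any solution of \eqref{Eq1} is, up to tangential reparametrization, a graph over $M$ solving the scalar equation. The main obstacle is not the abstract machinery but the bookkeeping at the corner $\partial M\times\{0\}$: one must verify that the geometric Neumann condition translates \emph{exactly} into $\nabla_{\mu}\varphi_{0}=0$ via Lemma \ref{lemma2-1}(iv), and recognize that higher-order compatibility conditions generally fail --- which is precisely why the regularity up to $t=0$ is only $C^{2+\gamma,1+\gamma/2}$ while smoothness holds for $t>0$ --- together with the verification that the reparametrizing diffeomorphisms $\psi(\cdot,t)$ preserve $\partial M$, which relies on the radial compatibility $\mu(x)=\mu(rx)$ of the boundary vector field.
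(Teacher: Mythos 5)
Your proposal is correct and takes essentially the same route as the paper, which checks that $\left.\partial Q/\partial\varphi_{\xi\xi}\right|_{\varphi_0}>0$ on $M$ and then cites the standard short-time existence theory for quasilinear parabolic Neumann problems as in \cite{Ge90,Ge06,Mar}, whereas you fill in the Schauder/fixed-point argument, the compatibility check, and the reparametrization step. Two minor slips: since $D\varphi=u^{-1}Du$ one has $v^2=1-|D\varphi|^2$ rather than $1-e^{-2\varphi}|D\varphi|^2$, and in this one-dimensional setting the tangential drift $\zeta$ actually \emph{vanishes} at $\partial M$ (the Neumann condition gives $u_\xi=0$ there, making $\partial_t X$ purely normal), which is what forces $\psi(\cdot,t)$ to fix the two endpoints.
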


Let $T^{\ast}$ be the maximal time such that there exists some
\begin{eqnarray*}
u\in C^{2+\gamma , 1+\frac{\gamma}{2}}\left(M \times
\left[0,T^{\ast}\right)\right) \cap C^{\infty}\left(M \times
\left(0,T^{\ast}\right)\right)
\end{eqnarray*}
which solves \eqref{Evo-1}. In the sequel, we shall prove a priori
estimates for those admissible solutions on $[0,T]$ where
$T<T^{\ast}$.

\section{ $C^{0}$, $\dot{\varphi}$ and gradient estimates} \label{se3}

\begin{lemma}[\textbf{$C^{0}$ estimate}]\label{estimate1}
Let $\varphi$ be a solution of \eqref{Evo-1}, and then for
$\alpha<0$, we have
\begin{eqnarray*}
c_{1}\leq u(x,t)\Theta^{-1}(t,c)\leq c_{2}, \qquad\quad\forall
~~x\in M, ~~t\in [0,T]
\end{eqnarray*}
for some positive constants $c_{1},$ $c_{2},$ where
$\Theta(t,c):=\{-\alpha t + e^{\alpha c}\}^{\frac{1}{\alpha}}$ with
\begin{eqnarray*}
\mathop{{\rm inf}}\limits_{M}\hspace{0.15em}\varphi (\cdot,0)\leq
c\leq \mathop{{\rm sup}}\limits_{M}\hspace{0.15em}\varphi(\cdot,0)
\end{eqnarray*}
\end{lemma}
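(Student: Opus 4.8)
The plan is to derive the $C^{0}$ bound from an ODE comparison argument applied to the spatial extrema of $\varphi$. Recall that $\varphi$ satisfies
\[
\frac{\partial \varphi}{\partial t}=Q(\varphi,D\varphi,D^{2}\varphi)=-e^{-\alpha\varphi}\bigl(1-|D\varphi|^{2}\bigr)\frac{1}{\,1+\tfrac{1}{v^{2}}\varphi_{\xi\xi}\,},
\]
with the Neumann condition $\nabla_{\mu}\varphi=0$ on $\partial M$. First I would set $\varphi_{\min}(t):=\inf_{M}\varphi(\cdot,t)$ and $\varphi_{\max}(t):=\sup_{M}\varphi(\cdot,t)$ and work with these Lipschitz functions of $t$ (using the Hamilton trick / Hamilton's lemma to differentiate them a.e.). At an interior spatial minimum point $x_{t}$ one has $D\varphi(x_{t},t)=0$ and $\varphi_{\xi\xi}(x_{t},t)\ge 0$, so $1-|D\varphi|^{2}=1$ and $1+\tfrac{1}{v^{2}}\varphi_{\xi\xi}\ge 1$; hence at that point
\[
\frac{\partial\varphi}{\partial t}=-\,e^{-\alpha\varphi}\cdot\frac{1}{\,1+\tfrac{1}{v^{2}}\varphi_{\xi\xi}\,}\ \ge\ -\,e^{-\alpha\varphi_{\min}(t)}
\]
(using $\alpha<0$, so $e^{-\alpha\varphi}>0$, and the quotient is at most $1$). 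If the minimum is attained on $\partial M$, the Neumann condition together with the fact that $\mu$ spans $T_{x}M$ forces $D\varphi=0$ there as well, so the same computation goes through (one must check, as in \cite{GaoY2,Ge90}, that the boundary does not produce a bad sign — this is where Hopf-type reasoning enters). Symmetrically, at a spatial maximum $\varphi_{\xi\xi}\le 0$, and provided the admissibility assumption $1+\tfrac{1}{v^{2}}\varphi_{\xi\xi}>0$ holds, the quotient is $\ge 1$, giving $\tfrac{d}{dt}\varphi_{\max}\le -e^{-\alpha\varphi_{\max}}$.

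Next I would integrate these differential inequalities. Consider the ODE $\dot{y}=-e^{-\alpha y}$, i.e. $\tfrac{d}{dt}e^{\alpha y}=-\alpha$, whose solution with $e^{\alpha y(0)}=e^{\alpha c}$ is $e^{\alpha y(t)}=-\alpha t+e^{\alpha c}$, that is $y(t)=\tfrac{1}{\alpha}\ln(-\alpha t+e^{\alpha c})=\ln\Theta(t,c)$ with $\Theta(t,c)=\{-\alpha t+e^{\alpha c}\}^{1/\alpha}$ exactly as in the statement. Taking $c=\varphi_{\min}(0)=\inf_{M}\varphi(\cdot,0)$, a comparison-principle argument for ODEs (monotonicity of the flow of $\dot y=-e^{-\alpha y}$ in the initial condition, valid since the right-hand side is locally Lipschitz in $y$) yields $\varphi_{\min}(t)\ge \ln\Theta(t,\varphi_{\min}(0))$; taking $c=\varphi_{\max}(0)$ gives $\varphi_{\max}(t)\le \ln\Theta(t,\varphi_{\max}(0))$. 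Since $u=e^{\varphi}$, exponentiating gives
\[
\Theta(t,\varphi_{\min}(0))\ \le\ u(x,t)\ \le\ \Theta(t,\varphi_{\max}(0)),\qquad \forall\,x\in M,\ t\in[0,T].
\]
Finally, to reach the stated form $c_{1}\le u(x,t)\Theta^{-1}(t,c)\le c_{2}$ for any fixed $c$ with $\inf_{M}\varphi(\cdot,0)\le c\le\sup_{M}\varphi(\cdot,0)$, I would observe that the ratio $\Theta(t,c')/\Theta(t,c)=\{(-\alpha t+e^{\alpha c'})/(-\alpha t+e^{\alpha c})\}^{1/\alpha}$ is, as a function of $t\in[0,\infty)$, continuous, positive, and has finite positive limits as $t\to 0^{+}$ (value $(e^{\alpha c'}/e^{\alpha c})^{1/\alpha}=e^{c'-c}$) and as $t\to\infty$ (value $1$); hence it is bounded between two positive constants depending only on $c,c'$. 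Applying this with $c'=\varphi_{\min}(0)$ and $c'=\varphi_{\max}(0)$ produces the required $c_{1},c_{2}$, which depend only on $\alpha$ and the oscillation of $\varphi_{0}$.

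The main obstacle I anticipate is the boundary analysis: one must rule out the possibility that a spatial extremum migrates to $\partial M$ in a way that spoils the differential inequality. The resolution is that the vector field $\mu$ spans the one-dimensional tangent space $T_{x}M$ at each boundary point, so $\nabla_{\mu}\varphi=0$ is equivalent to $D\varphi=0$ on $\partial M$ (this is precisely item (iv) of Lemma \ref{lemma2-1} combined with the structural assumption on $\mu$); thus the full gradient vanishes at any boundary extremum and no Hopf-lemma boundary term survives. A secondary technical point is the non-smoothness of $\varphi_{\min}(t)$ and $\varphi_{\max}(t)$, handled in the standard way via Hamilton's lemma so that the differential inequalities hold in the sense of Dini derivatives, which suffices for the integrated comparison. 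Everything else — the elementary ODE integration giving $\Theta$, and the boundedness of the ratio $\Theta(t,c')/\Theta(t,c)$ — is routine.
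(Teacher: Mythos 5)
Your proof is correct and follows essentially the same route as the paper's: the paper introduces the constant-in-space barrier $\varphi(t)=\frac{1}{\alpha}\ln(-\alpha t+e^{\alpha c})$ solving $\dot\varphi=-e^{-\alpha\varphi}$ and invokes the parabolic maximum principle, whereas you differentiate the spatial extrema directly via Hamilton's lemma and compare against the same ODE. Your explicit treatment of the boundary (using that $\mu$ spans the one-dimensional $T_{x}M$, so $\nabla_{\mu}\varphi=0$ forces $D\varphi=0$ on $\partial M$) and of the boundedness of the ratio $\Theta(t,c')/\Theta(t,c)$ simply fills in details the paper leaves implicit in the phrase ``using the maximum principle''.
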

\begin{proof}
Let $\varphi(x,t) = \varphi(t)$ (independent of $x$) be the solution
of \eqref{Evo-1} with $\varphi(0) = c$. In this case, the first
equation in \eqref{Evo-1} reduces to an ODE
\begin{eqnarray*}
\frac{d}{dt}\varphi=-e^{-\alpha \varphi}.
\end{eqnarray*}
Therefore,
\begin{eqnarray}\label{blow}
\varphi(t)=\frac{1}{\alpha}\ln (-\alpha t + e^{\alpha c}),\qquad{\rm
for}~~\alpha<0.
\end{eqnarray}
Using the maximum principle, we can obtain that
\begin{eqnarray}\label{C0}
\frac{1}{\alpha}\ln (-\alpha t+e^{\alpha\varphi_{1}})\leq
\varphi(x,t)\leq \frac{1}{\alpha}\ln (-\alpha
t+e^{\alpha\varphi_{2}}),
\end{eqnarray}
where $\varphi_{1}:={\rm inf}_{M}\varphi (\cdot,0)$ and
$\varphi_{2}:={\rm sup}_{M}\varphi (\cdot,0)$. The estimate is
obtained since $\varphi = \ln u$.
\end{proof}

\begin{lemma}[\bf$\dot{\varphi}$ estimate]\label{estimate2}
Let $\varphi$ be a solution of \eqref{Evo-1} and $\Sigma$ be the
boundary of a smooth, convex cone defined as in Theorem
\ref{main1.1}, then for $\alpha<0$,
\begin{eqnarray*}
{\rm min}\left\{\mathop{{\rm inf}}\limits_{M}(\dot{\varphi}(\cdot
,0)\cdot \Theta(0)^{\alpha}),-1\right\}\leq
\dot{\varphi}(x,t)\Theta(t)^{\alpha}\leq {\rm
max}\left\{\mathop{{\rm sup}}\limits_{M}(\dot{\varphi}(\cdot
,0)\cdot \Theta(0)^{\alpha}),-1\right\}.
\end{eqnarray*}
\end{lemma}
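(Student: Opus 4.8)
The plan is to derive a parabolic equation for the quantity $\dot\varphi$ and apply the parabolic maximum principle, paying special attention to the behavior at the boundary $\partial M$. First I would differentiate the evolution equation $\partial_t\varphi = Q(\varphi, D\varphi, D^2\varphi)$ in time. Setting $w := \dot\varphi$, this yields a linear parabolic equation of the form
\begin{eqnarray*}
\frac{\partial w}{\partial t} = \frac{\partial Q}{\partial \varphi_{\xi\xi}}\, w_{\xi\xi} + \frac{\partial Q}{\partial \varphi_{\xi}}\, w_{\xi} + \frac{\partial Q}{\partial \varphi}\, w,
\end{eqnarray*}
where the coefficient $\partial Q/\partial\varphi_{\xi\xi}$ is positive by the argument already given before Lemma \ref{Lemma2-2} (it equals $\frac{1}{u^{2+\alpha}k^2v^2}$ along admissible solutions, hence parabolicity is preserved on $[0,T]$). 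The zeroth-order coefficient is $\partial Q/\partial\varphi = -\alpha Q = -\alpha\dot\varphi$, which is exactly what produces the $\Theta(t)^\alpha$ weight: one checks that for the spatially homogeneous solution $\varphi(t) = \frac{1}{\alpha}\ln(-\alpha t + e^{\alpha c})$ from Lemma \ref{estimate1} one has $\dot\varphi(t) = -e^{-\alpha\varphi(t)} = -(-\alpha t + e^{\alpha c})^{-1} = -\Theta(t)^{-\alpha}$, so $\dot\varphi(t)\Theta(t)^\alpha \equiv -1$ is the natural ``barrier'' value. Accordingly I would introduce the rescaled quantity $\chi(x,t) := \dot\varphi(x,t)\,\Theta(t)^\alpha$ and compute its evolution equation; the $\dot\varphi$-term in $\partial_t w$ should cancel against the time-derivative of the weight $\Theta(t)^\alpha$, leaving a linear parabolic equation for $\chi$ with no zeroth-order term (or with a sign-favorable one), so that the maximum principle applies cleanly to $\chi$ and its extrema are controlled by the initial data together with the constant $-1$.

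The second ingredient is the treatment of the boundary condition. Differentiating the Neumann condition $\nabla_\mu\varphi = 0$ in $t$ gives $\nabla_\mu\dot\varphi = 0$ on $\partial M\times(0,T)$, hence $\nabla_\mu\chi = 0$ as well (the weight $\Theta(t)^\alpha$ is spatially constant). In this one-dimensional setting $\mu$ spans $T_xM$ at each boundary point, so $\nabla_\mu\chi = 0$ forces $\chi_\xi = 0$ at $x(c)$ and $x(d)$. This is precisely the hypothesis needed to rule out a boundary point being the location of a spatial maximum or minimum of $\chi$ via Hopf's lemma: at an interior-in-time first touching of the extremum at the boundary, the outward normal derivative would have to be nonzero unless $\chi$ is constant, contradicting $\chi_\xi = 0$ there. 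Thus the maximum (resp.\ minimum) of $\chi$ over $M\times[0,T]$ is attained either at $t=0$ or is propagated from the constant subsolution/supersolution $-1$, which yields
\begin{eqnarray*}
\min\Big\{\inf_M\big(\dot\varphi(\cdot,0)\,\Theta(0)^\alpha\big),\,-1\Big\}\le \chi(x,t)\le \max\Big\{\sup_M\big(\dot\varphi(\cdot,0)\,\Theta(0)^\alpha\big),\,-1\Big\},
\end{eqnarray*}
which is the claimed estimate once one recalls $\chi = \dot\varphi\,\Theta^\alpha$.

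I expect the main obstacle to be the bookkeeping in the evolution equation for $\chi$: one must verify that, after multiplying $\dot\varphi$ by $\Theta(t)^\alpha$, the ``bad'' zeroth-order term $-\alpha\dot\varphi\cdot w$ is exactly absorbed by $\frac{d}{dt}\big(\Theta(t)^\alpha\big)$ using $\frac{d}{dt}\Theta = -\Theta^{1-\alpha}$ (equivalently $\Theta$ solves $\Theta' = -\Theta^{1+?}$ — the exact exponent has to be tracked carefully from $\Theta(t,c)=(-\alpha t + e^{\alpha c})^{1/\alpha}$), so that $\chi$ genuinely satisfies a homogeneous linear parabolic equation whose constant solutions include $-1$. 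A secondary technical point is confirming that the derived linear operator for $\chi$ really has the maximum-principle-friendly structure on the possibly-degenerate-looking parabolic equation — but since we work on $[0,T]$ with $T<T^\ast$ and the solution is admissible (so the leading coefficient is strictly positive and $v$ is bounded away from degeneracy by the gradient estimate available in this section), uniform parabolicity holds and both the interior maximum principle and Hopf's boundary lemma apply. Everything else — differentiating the PDE and the boundary condition, and identifying the barrier value — is routine.
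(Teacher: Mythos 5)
Your plan tracks the paper's proof closely (differentiate the PDE and the Neumann condition, rescale $\dot\varphi$ by $\Theta^\alpha$, apply the parabolic maximum principle and Hopf's lemma, use the spatially homogeneous solution to identify $-1$ as the barrier value), and the final conclusion is stated correctly. But your expectation of the structure of the rescaled equation is wrong, and this is not a bookkeeping detail. The zeroth-order coefficient you identify, $\partial Q/\partial\varphi = -\alpha Q = -\alpha\dot\varphi$, already depends on the unknown, so the equation for $w=\dot\varphi$ has a \emph{quadratic} reaction term $-\alpha w^2$. Combining this with $\frac{d}{dt}\Theta^\alpha = -\alpha$ does not cancel anything; instead, for $\chi := \dot\varphi\,\Theta^\alpha$ one obtains
$$\partial_t\chi = Q^{\xi\xi}\chi_{\xi\xi} + Q^{\xi}\chi_\xi - \alpha\Theta^{-\alpha}(1+\chi)\chi,$$
which is the equation the paper writes for $\mathcal{M}$. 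There is no ``homogeneous linear parabolic equation whose constant solutions include $-1$''; the constant solutions of the actual equation are $\chi\equiv 0$ and $\chi\equiv -1$ precisely because these are the roots of the quadratic $(1+\chi)\chi$. The $\max/\min$-with-$-1$ form of the estimate is a consequence of this nonlinear structure, not of a sign-favorable linear zeroth-order term.

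Closing the argument from the nonlinear equation is also less ``routine'' than you indicate, because the reaction term $-\alpha\Theta^{-\alpha}(1+\chi)\chi$ has the favorable sign only for $\chi\in[-1,0]$. At an interior spatial maximum of $\chi$ the ODE inequality $\frac{d}{dt}\chi_{\max} \le -\alpha\Theta^{-\alpha}(1+\chi_{\max})\chi_{\max}$ only forces $\chi_{\max}$ down when $(1+\chi_{\max})\chi_{\max}\le 0$; if $\chi_{\max}>0$ the right-hand side is positive and the bound is not protected. One therefore needs the additional observation that $\dot\varphi = Q < 0$ (since $e^{-\alpha\varphi}>0$, $1-|D\varphi|^2>0$ for spacelike graphs, and $1+\varphi_{\xi\xi}/v^2>0$ by convexity), hence $\chi<0$ everywhere; together with the case analysis $\chi\in(-1,0)$ versus $\chi\le -1$ this gives the asserted two-sided estimate. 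The paper compresses all of this into ``the result follows from the maximum principle,'' but the sign of $\dot\varphi$ and the quadratic structure of the reaction term are the actual load-bearing facts, and your write-up would fail if you tried to run it as literally written.
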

\begin{proof}
Set
\begin{eqnarray*}
\mathcal{M}(x(\xi),t)=\dot{\varphi}(x(\xi), t)\Theta(t)^{\alpha}.
\end{eqnarray*}
Differentiating both sides of the first evolution equation of
\eqref{Evo-1}, it is easy to get that
\begin{eqnarray*}\label{Eq-1}
\left\{
\begin{aligned}
&\frac{\partial \mathcal{M}}{\partial
t}=Q^{\xi\xi}\mathcal{M}_{\xi\xi}+Q^{\xi}\mathcal{M}_{\xi}-\alpha\Theta^{-\alpha}\left(1+\mathcal{M}\right)\mathcal{M}
\qquad &&~\mathrm{in}~
M\times(0,T)\\
&\nabla_{\mu}\mathcal{M}=0  \qquad&&~\mathrm{on}~ \partial M\times(0,T)\\
&\mathcal{M}(\cdot,0)=\dot\varphi_{0}\cdot\Theta(0)^{\alpha} \qquad
&&~\mathrm{in}~M,
\end{aligned}
\right.
\end{eqnarray*}
where $Q^{\xi\xi}:=\frac{\partial Q}{\partial \varphi_{\xi\xi}}$ and
$Q^{\xi}:=\frac{\partial Q}{\partial \varphi_{\xi}}.$ Then the
result follows from the maximum principle.
\end{proof}

\begin{lemma}[\textbf{Gradient estimate}]\label{estimate3}
Let $\varphi$ be a solution of \eqref{Evo-1} and $\Sigma$ be the
boundary of a smooth, convex cone described as in Theorem
\ref{main1.1}. Then for $\alpha<0$, we have
\begin{eqnarray*}\label{gradient}
\left|D\varphi\right|\leq \mathop{{\rm
sup}}\limits_{M}\left|D\varphi(\cdot,0)\right|<1,\qquad\forall~~
x\in M,~~ t\in\left[0,T\right].
\end{eqnarray*}
\end{lemma}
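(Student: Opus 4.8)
The plan is to run a maximum principle argument on the quantity $w := |D\varphi|^{2} = \varphi_\xi \varphi^\xi$ over $M\times[0,T]$. First I would derive the evolution equation satisfied by $w$ by differentiating the scalar flow equation $\partial_t\varphi = Q(\varphi,D\varphi,D^2\varphi)$ in the $\xi$-direction, contracting with $\varphi^\xi$, and commuting covariant derivatives on $\mathscr{H}^1(1)$ (which is trivial here since the metric component is $\sigma_{\xi\xi}=1$, so there is no curvature term to worry about). This produces a parabolic equation of the schematic form
\begin{eqnarray*}
\frac{\partial w}{\partial t} = Q^{\xi\xi} w_{\xi\xi} + (\text{first-order terms in } w_\xi) - 2 Q^{\xi\xi}\varphi_{\xi\xi}^{2}\cdot(\text{positive factor}) + (\text{terms proportional to } w),
\end{eqnarray*}
where the crucial point is that $Q^{\xi\xi} = \partial Q/\partial\varphi_{\xi\xi}$ is positive (established already in Section \ref{se2}, since $Q^{\xi\xi}=\frac{1}{u^{2+\alpha}k^2v^2}>0$), so that the operator is genuinely parabolic, and that the ``reaction'' terms multiplying $w$ itself have a sign that does not destroy the bound. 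Because $Q$ is homogeneous in a way that makes $\partial_t\varphi<0$ and the anisotropic factor $e^{-\alpha\varphi}$ is handled, one expects the zeroth-order term in $w$ to be either non-positive at an interior spatial maximum or at worst absorbable, so that $\sup_M w(\cdot,t)$ is non-increasing in $t$.

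The second ingredient is the boundary: at a point of $\partial M\times(0,T)$ where $w$ attains a spatial maximum, I would invoke the Neumann condition $\nabla_\mu\varphi=0$ together with the convexity of $M$ (equivalently, of the cone $\Sigma$; recall from Remark \ref{remark1.1}(3) that convexity of $M$ forces the curvature of $M$ w.r.t.\ $\mu$ to be positive). The standard computation — differentiating $\nabla_\mu\varphi=0$ along the boundary and using the second fundamental form of $\partial M$ in $M$ — shows that $\nabla_\mu w \le 0$ (or $\ge 0$, depending on orientation of $\mu$) at such a point, so that Hopf's lemma rules out a boundary maximum of $w$ that exceeds the initial value. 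In the $1$-dimensional setting here the boundary $\partial M$ is just two points $\{x(c),x(d)\}$, so this reduces to checking the sign of a single derivative at each endpoint; the convexity hypothesis is exactly what makes that sign favorable.

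Combining the two, the parabolic maximum principle (interior estimate plus Hopf boundary lemma) gives $\sup_{M\times[0,T]} |D\varphi|^2 \le \sup_M |D\varphi(\cdot,0)|^2$, and the strict inequality $\sup_M|D\varphi(\cdot,0)|<1$ holds because $M_0$ is a spacelike curve, so $v_0=\sqrt{1-u_0^{-2}|Du_0|^2}>0$ and in fact $|D\varphi_0|=|Du_0|/u_0<1$ pointwise on the compact set $M$. I expect the main obstacle to be the bookkeeping of the zeroth- and first-order terms in the evolution equation for $w$: one must verify that the term linear in $w$ has a sign (or can be bounded) so that it does not force growth, and that the quadratic curvature term $-2Q^{\xi\xi}\varphi_{\xi\xi}^2\cdot(\cdots)$ genuinely has the right (non-positive) sign — this is where the precise structure of $Q$, in particular the factor $1-|D\varphi|^2$ in the numerator and the sign of $\alpha<0$, has to be used carefully rather than treated as a black box. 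The boundary term, by contrast, is essentially a one-line consequence of convexity plus the Neumann condition.
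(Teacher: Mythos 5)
Your approach is the same as the paper's: run the maximum principle on $\psi=\tfrac{1}{2}|D\varphi|^{2}$ (you use $w=|D\varphi|^{2}$, an immaterial normalization). Two points in your sketch, though, are softer than they need to be, and the paper's computation resolves both cleanly.

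First, the zeroth-order term. You write that ``one expects'' the reaction term to be non-positive or ``at worst absorbable,'' but this is exactly the step that must be checked and it is not merely an expectation. Differentiating $Q(\varphi,D\varphi,D^{2}\varphi)$ and contracting with $\varphi^{\xi}$ gives precisely
\begin{equation*}
\frac{\partial\psi}{\partial t}=Q^{\xi\xi}\psi_{\xi\xi}+Q^{\xi}\psi_{\xi}-Q^{\xi\xi}\varphi_{\xi\xi}^{2}-\alpha Q\,\varphi_{\xi}^{2},
\end{equation*}
where the last term comes from $\partial Q/\partial\varphi=-\alpha Q$ (since $Q\propto e^{-\alpha\varphi}$). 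One then observes that $Q<0$ (its definition has an overall minus sign and all remaining factors are positive by spacelikeness and convexity), so $\alpha<0$ forces $\alpha Q>0$ and hence $-\alpha Q\varphi_{\xi}^{2}\le 0$. Together with $-Q^{\xi\xi}\varphi_{\xi\xi}^{2}\le 0$, the differential inequality $\partial_{t}\psi\le Q^{\xi\xi}\psi_{\xi\xi}+Q^{\xi}\psi_{\xi}$ follows. Nothing needs to be ``absorbed''; the sign is exact.

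Second, the boundary. You invoke the convexity of the cone and the second fundamental form of $\partial M$ to argue $\nabla_{\mu}\psi$ has a favorable sign via Hopf's lemma, but in this $1$-dimensional situation $\partial M$ is two points and carries no second fundamental form, and convexity plays no role at the boundary. The Neumann condition $\mu^{\xi}\varphi_{\xi}=0$ with $\mu^{\xi}\neq 0$ forces $\varphi_{\xi}=0$ at each endpoint, and therefore $\psi_{\xi}=\varphi_{\xi\xi}\varphi_{\xi}=0$ there automatically, i.e.\ $\nabla_{\mu}\psi=0$ on $\partial M\times(0,T)$. So $\psi$ itself satisfies a homogeneous Neumann condition and the parabolic maximum principle applies directly, with no Hopf-lemma contortions. (The convexity/second-fundamental-form argument you describe is what one does in the higher-dimensional versions of this lemma, but it degenerates here.) With these two points made precise, your proof closes; the conclusion $\sup_{M}|D\varphi(\cdot,0)|<1$ from spacelikeness of $M_{0}$ is correct as you state it.
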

\begin{proof}
Set $\psi=\frac{\left|D\varphi\right|^{2}}{2}$. By differentiating
$\psi$, we have
\begin{eqnarray*}
\frac{\partial\psi}{\partial t}=\frac{\partial}{\partial
t}\varphi_{\xi}\cdot\varphi_{\xi}=Q_{\xi}\varphi_{\xi}.
\end{eqnarray*}
Then using the evolution equation of $\varphi$ in (\ref{Evo-1})
yields
\begin{eqnarray*}
\frac{\partial\psi}{\partial
t}=Q^{\xi\xi}\varphi_{\xi\xi\xi}\varphi_{\xi}+Q^{\xi}\varphi_{\xi\xi}\varphi_{\xi}-\alpha
Q\varphi_{\xi}^{2}.
\end{eqnarray*}
Therefore, we can express $\varphi_{\xi\xi\xi}\varphi_{\xi}$ as
\begin{eqnarray*}
\varphi_{\xi\xi\xi}\varphi_{\xi}=\psi_{\xi\xi}-\varphi_{\xi\xi}^{2}.
\end{eqnarray*}
Also, we can express $\varphi_{\xi\xi}\varphi_{\xi}$ as
\begin{eqnarray*}
\varphi_{\xi\xi}\varphi_{\xi}=\psi_{\xi}.
\end{eqnarray*}
Then, we have
\begin{eqnarray}\label{gra}
\frac{\partial\psi}{\partial
t}=Q^{\xi\xi}\psi_{\xi\xi}+Q^{\xi}\psi_{\xi}-Q^{\xi\xi}\varphi_{\xi\xi}^{2}-\alpha
Q\varphi_{\xi}^{2}
\end{eqnarray}

Since $Q^{\xi\xi}$ and $\alpha Q$ is positive definite, the third
and fourth terms in the RHS of \eqref{gra} are non-positive. Since
$\varphi$ is a solution of \eqref{Evo-1}, that is,
\begin{eqnarray*}
\nabla_{\mu}\varphi=\mu^{\xi}\varphi_{\xi}=0\qquad {\rm on}~~
\partial M\times(0,T),
\end{eqnarray*}
where $\mu=\mu^{\xi}\partial_{\xi}$, we have
\begin{eqnarray*}
\nabla_{\mu}\psi=\mu^{\xi}\psi_{\xi}=\mu^{\xi}\varphi_{\xi\xi}\varphi_{\xi}
\qquad {\rm on}~~ \partial M\times(0,T).
\end{eqnarray*}
So, we can get
\begin{eqnarray}\label{Eq-2}
\left\{
\begin{aligned}
&\frac{\partial\psi}{\partial t}\leq Q^{\xi\xi}\psi_{\xi\xi}+Q^{\xi}\psi_{\xi} \qquad &&~\mathrm{in}~M\times(0,T)\\
&\nabla_{\mu}\psi=0  \qquad&&~\mathrm{on}~ \partial M\times(0,T)\\
&\psi(\cdot,0)=\frac{\left|\varphi_{\xi}(\cdot,0)\right|^{2}}{2}
\qquad &&~\mathrm{in}~M.
\end{aligned}
\right.
\end{eqnarray}
Using the maximum principle, we have
\begin{eqnarray*}
\left|D\varphi\right|\leq \mathop{{\rm
sup}}\limits_{M}\left|D\varphi(\cdot,0)\right|.
\end{eqnarray*}
Since $G_{0}=\{\left(x(\xi),u(x(\xi),0)\right)|x\in M\}$ is a
spacelike graph of $\mathbb{R}^{2}_{1}$, so we have
\begin{eqnarray*}
\left|D\varphi\right|\leq \mathop{{\rm
sup}}\limits_{M}\left|D\varphi(\cdot,0)\right|<1,\qquad\forall~~
x\in M,~~ t\in\left[0,T\right].
\end{eqnarray*}
Our proof is finished.
\end{proof}

\begin{remark}
\rm{The gradient estimate in Lemma \ref{estimate3} makes sure that
the evolving graphs $G_{t}:=\{\left(x,u(x,t)\right)|x\in M,0\leq
t\leq T\}$ are spacelike graphs.}
\end{remark}

Combing the gradient estimate with $\dot\varphi$ estimate, we can
obtain:

\begin{corollary}
If $\varphi$ satisfies \eqref{Evo-1}, then we have
\begin{eqnarray*}\label{k-boundary}
0<c_{3}\leq k\Theta\leq c_{4}<+\infty
\end{eqnarray*}
where $c_{3}$ and $c_{4}$ are positive constants independent of
$\varphi$.
\end{corollary}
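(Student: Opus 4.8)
The plan is to obtain the bound on $k\Theta$ as a purely algebraic consequence of the three estimates already proved. First I would put the flow equation in a form that isolates $k$: since $\varphi=\ln u$, dividing the equation $\partial_{t}u=-v/(u^{\alpha}k)$ in \eqref{Eq2} by $u$ gives
\[
\dot\varphi=-\frac{v}{u^{1+\alpha}k},\qquad\text{equivalently}\qquad k=-\frac{v}{\dot\varphi\,u^{1+\alpha}},
\]
which also re-confirms $k>0$ because $\dot\varphi=Q<0$ (the factor $1+v^{-2}\varphi_{\xi\xi}$ stays positive by Lemma \ref{Lemma2-2}, and $1-|D\varphi|^{2}=v^{2}>0$). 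Multiplying by $\Theta$ and writing $\dot\varphi=(\dot\varphi\,\Theta^{\alpha})\Theta^{-\alpha}$ yields the key identity
\[
k\Theta=\frac{v}{-\dot\varphi\,\Theta^{\alpha}}\left(\frac{\Theta}{u}\right)^{1+\alpha}.
\]

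The remaining step is to bound each of the three factors on the right between positive constants determined by $M_{0}$ alone. The quantity $v=\sqrt{1-|D\varphi|^{2}}$ lies in $\bigl[\sqrt{1-\sup_{M}|D\varphi(\cdot,0)|^{2}},\,1\bigr]$ by the gradient estimate (Lemma \ref{estimate3}), hence is pinched between two positive constants. For the middle factor, the $\dot\varphi$ estimate (Lemma \ref{estimate2}) traps $\dot\varphi\,\Theta^{\alpha}$ between its two initial extrema and $-1$; since $\dot\varphi(\cdot,0)=Q|_{t=0}<0$, all of these are strictly negative, so $-\dot\varphi\,\Theta^{\alpha}$ stays in a fixed interval $[c_{5},c_{6}]\subset(0,\infty)$ and $1/(-\dot\varphi\,\Theta^{\alpha})$ is likewise bounded above and below by positive constants. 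Finally, the $C^{0}$ estimate (Lemma \ref{estimate1}) gives $u\,\Theta^{-1}\in[c_{1},c_{2}]$, hence $\Theta/u\in[1/c_{2},1/c_{1}]$, and raising to the power $1+\alpha$ leaves this quantity between $(1/c_{1})^{1+\alpha}$ and $(1/c_{2})^{1+\alpha}$ — in whichever order the sign of $1+\alpha$ dictates. Multiplying the three two-sided bounds produces constants $0<c_{3}\le c_{4}<+\infty$, independent of the solution, with $c_{3}\le k\Theta\le c_{4}$.

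The argument is essentially bookkeeping, so I do not expect a genuine obstacle; the two points that need a moment's care are the following. First, one must check that the upper estimate for $\dot\varphi\,\Theta^{\alpha}$ is \emph{strictly} negative — this is exactly what forces $c_{4}<+\infty$, and it relies on $\dot\varphi=Q<0$ together with the positivity of $1+v^{-2}\varphi_{\xi\xi}$ coming from the short-time existence lemma. Second, the exponent $1+\alpha$ has a sign that is not fixed by the hypothesis $\alpha<0$, so the two endpoints of the interval for $(\Theta/u)^{1+\alpha}$ must be recorded symmetrically. Once these are noted, collecting the constants completes the proof.
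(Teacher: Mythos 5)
Your proposal is correct and coincides with what the paper has in mind: the paper offers no written proof for this corollary, only the remark that it follows by combining the gradient estimate with the $\dot\varphi$ estimate, and your factorization
\[
k\Theta=\frac{v}{-\dot\varphi\,\Theta^{\alpha}}\left(\frac{\Theta}{u}\right)^{1+\alpha}
\]
is exactly the bookkeeping that makes that remark precise. One small point worth recording for completeness: the paper's prefatory sentence names only the gradient and $\dot\varphi$ estimates, but as your computation shows the $C^{0}$ estimate (Lemma \ref{estimate1}) is also genuinely needed to control the factor $(\Theta/u)^{1+\alpha}$, and you correctly invoke it; your caution about the sign of the exponent $1+\alpha$ and about the strict negativity of $\sup_{M}\dot\varphi(\cdot,0)\Theta(0)^{\alpha}$ (which rests on the compactness of $M$ and the strict convexity/spacelikeness of $M_{0}$) are both the right places to be careful.
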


\section{H\"{o}lder estimates and the long-time existence} \label{se4}

Set $\Phi=\frac{1}{\left|X\right|^{\alpha}k}$,
$w=\left<X,\nu\right>_{L}$ and $\Psi=\frac{\Phi}{w}$. We can get the
following evolution equations:

\begin{lemma}\label{lemma4.1}
Under the assumptions of Theorem \ref{main1.1}, we have
\begin{eqnarray*}
\frac{\partial}{\partial t}g_{\xi\xi}=-2\Phi kg_{\xi\xi},
\end{eqnarray*}
\begin{eqnarray*}
\frac{\partial}{\partial t}g^{\xi\xi}=2\Phi kg^{\xi\xi},
\end{eqnarray*}
\begin{eqnarray*}
\frac{\partial}{\partial t}\nu=\nabla\Phi X_{\xi},
\end{eqnarray*}
\begin{eqnarray*}
\begin{split}
\partial_{t}k=\Phi k^{2} - \alpha(\alpha+1)\Phi u^{-2}|\nabla u|^{2} + \alpha u^{-1}\Phi\Delta u - 2\Phi k^{-2}|\nabla k|^{2} + \Phi k^{-1}\Delta k
\end{split}
\end{eqnarray*}
and
\begin{eqnarray}\label{div-1}
\begin{split}
\frac{\partial\Psi}{\partial t}&=\mathrm{div}_{g}\left(u^{-\alpha}k^{-2}\nabla\Psi\right) - 2k^{-2}u^{-\alpha}\Psi^{-1}|\nabla\Psi|^{2} \\
&+ \alpha\Psi^{2} - \alpha
u^{-\alpha-1}k^{-2}u_{\xi}\nabla^{\xi}\Psi +
\alpha\Psi^{2}u^{-1}\nabla^{\xi}u\left<X,X_{\xi}\right>_{L}.
\end{split}
\end{eqnarray}
\end{lemma}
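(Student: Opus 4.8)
The plan is to derive each evolution equation by direct computation, proceeding from the simplest (the metric) to the most involved (the scalar quantity $\Psi$), using the flow equation $\partial_t X = \Phi\nu$ with $\Phi = |X|^{-\alpha}k^{-1}$ together with the Gauss and Weingarten relations for the spacelike curve $M_t$ in $\mathbb{R}^2_1$. The sign conventions here are the Lorentzian ones: $\nu$ is past-directed timelike with $\langle\nu,\nu\rangle_L = -1$, and the second fundamental form enters through $X_{\xi\xi} = \Gamma X_\xi - k\, g_{\xi\xi}\,\nu$ (up to the sign fixed by the Frenet formula $T_s = kN$ in Lemma \ref{lemma2-1}), so one must track these signs carefully throughout.

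\emph{First}, for the induced metric: differentiating $g_{\xi\xi} = \langle X_\xi, X_\xi\rangle_L$ and commuting $\partial_t$ with $\partial_\xi$ gives $\partial_t g_{\xi\xi} = 2\langle \partial_\xi(\Phi\nu), X_\xi\rangle_L = 2\Phi\langle\nu_\xi, X_\xi\rangle_L$, since $\langle\nu, X_\xi\rangle_L = 0$. The Weingarten relation $\nu_\xi = $ (curvature)$\,\times X_\xi$ then produces $\partial_t g_{\xi\xi} = -2\Phi k\, g_{\xi\xi}$; the formula for $\partial_t g^{\xi\xi}$ follows immediately from $g^{\xi\xi}g_{\xi\xi} = 1$. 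For $\partial_t\nu$: since $\langle\nu,\nu\rangle_L = -1$ is constant, $\partial_t\nu$ is tangential, so $\partial_t\nu = \langle\partial_t\nu, X_\xi\rangle_L g^{\xi\xi} X_\xi$; differentiating $\langle\nu, X_\xi\rangle_L = 0$ in $t$ converts this into $-\langle\nu, \partial_\xi(\Phi\nu)\rangle_L g^{\xi\xi} X_\xi = \nabla^\xi\Phi\, X_\xi = \nabla\Phi\, X_\xi$ (in one dimension $\nabla\Phi = \Phi_\xi$ and I am writing $\nabla\Phi\,X_\xi$ for $g^{\xi\xi}\Phi_\xi X_\xi$, matching the statement's shorthand).

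\emph{Next}, the curvature evolution. Starting from the Frenet/Gauss relation, one writes $k$ in terms of $X_{\xi\xi}$ and $\nu$, differentiates in $t$, and commutes derivatives; the general flow-evolution of curvature for a curve moving with normal speed $\Phi$ reads $\partial_t k = \Phi k^2 + \Delta_g\Phi$ (with $\Delta_g$ the Laplace--Beltrami operator on $M_t$; note in Lorentzian signature the zeroth-order term is $+\Phi k^2$ rather than the Euclidean $-\Phi|A|^2$, consistent with the sign $\langle\nu,\nu\rangle_L=-1$). It then remains to expand $\Delta_g\Phi = \Delta_g(u^{-\alpha}k^{-1})$ by the product rule: $\Delta_g(u^{-\alpha}) = -\alpha u^{-\alpha-1}\Delta u + \alpha(\alpha+1)u^{-\alpha-2}|\nabla u|^2$ and $\Delta_g(k^{-1}) = -k^{-2}\Delta k + 2k^{-3}|\nabla k|^2$, plus a cross term $2\nabla(u^{-\alpha})\cdot\nabla(k^{-1})$; collecting these and factoring out $\Phi = u^{-\alpha}k^{-1}$ (recall $|X| = u$ along the graph, since $\langle X,X\rangle_L = -u^2$ in the polar-type coordinates of Lemma \ref{lemma2-1}) yields exactly the displayed expression — the cross term must be checked to combine correctly, which is the first place where bookkeeping is delicate.

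\emph{Finally}, for \eqref{div-1} one computes $\partial_t\Psi$ with $\Psi = \Phi/w$, $w = \langle X,\nu\rangle_L$, using $\partial_t w = \langle\Phi\nu,\nu\rangle_L + \langle X, \nabla\Phi X_\xi\rangle_L = -\Phi + \nabla^\xi\Phi\langle X, X_\xi\rangle_L$ and the curvature/$\Phi$ evolution above, then rewriting the resulting second-order operator in divergence form $\mathrm{div}_g(u^{-\alpha}k^{-2}\nabla\Psi)$ and isolating the gradient-squared, the $\alpha\Psi^2$, and the two lower-order $\alpha$-terms. \textbf{The main obstacle} I expect is precisely this last step: reorganizing $\partial_t\Psi$ into the clean divergence-structured form of \eqref{div-1}. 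One needs an identity expressing $w$ itself in terms of $u, u_\xi, k$ and the support-type function (so that $\nabla w$ and $\Delta w$ can be traded for curvature terms via the Codazzi-type relation $\nabla_\xi w = $ (something)$\,\cdot X_\xi\cdot\nabla k$, etc.), and the algebra of combining $\partial_t\Phi/w$ with $-\Phi\,\partial_t w/w^2$ while extracting the perfect divergence is where sign errors and missed terms are most likely. I would verify this piece by checking it against the isotropic limit $\alpha = 0$, where \eqref{div-1} must reduce to the known evolution of $\Psi$ for the classical IMCF with zero NBC in $\mathbb{R}^2_1$ (cf. \cite{GaoY2}), and also by a dimensional/homogeneity count in $u$ and $k$ on each term.
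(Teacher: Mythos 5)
Your derivations of the first three identities are sound and match what the paper tacitly does (the paper omits them as routine); your formula for $\partial_t w=-\Phi+\nabla^\xi\Phi\,\langle X,X_\xi\rangle_L$ is also correct. However, your curvature evolution has a sign error that is fatal to the claim that the expansion ``yields exactly the displayed expression.''

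With the paper's conventions ($\nu$ past-directed timelike, $\langle\nu,\nu\rangle_L=-1$, $\nu=-N$ and $T_s=kN$, hence $\nu_\xi=-kX_\xi$ and $k=g^{\xi\xi}\langle X_{\xi\xi},\nu\rangle_L$), one gets
\begin{equation*}
\partial_t\langle X_{\xi\xi},\nu\rangle_L=-\Phi_{\xi\xi}-\Phi k^2 g_{\xi\xi},\qquad \partial_t g^{\xi\xi}=2\Phi k g^{\xi\xi},
\end{equation*}
so $\partial_t k=2\Phi k^2-\Delta_g\Phi-\Phi k^2=\Phi k^2-\Delta_g\Phi$, not $\Phi k^2+\Delta_g\Phi$ as you wrote. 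It is only the zeroth-order term $\Phi k^2$ that changes sign relative to the Euclidean analogue (because of $\langle\nu,\nu\rangle_L=-1$); the Laplacian term does not. If you carry out your expansion with $+\Delta_g\Phi$ you obtain every term except $\Phi k^2$ with the opposite sign from the stated formula, so the claim that it ``yields exactly the displayed expression'' is false as written. (Incidentally, the cross term $2\alpha\Phi u^{-1}k^{-1}\nabla_\xi u\,\nabla^\xi k$ that you rightly flag as delicate does appear in the paper's own proof with a minus sign, but is silently dropped in the lemma statement; that is a separate typo in the source, not something your sign convention resolves.)

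For \eqref{div-1}, your outline is the correct strategy and coincides with the paper's: use $w_\xi=-k\langle X,X_\xi\rangle_L$, $w_{\xi\xi}=-k g_{\xi\xi}-k_\xi\langle X,X_\xi\rangle_L+k^2 g_{\xi\xi}w$, expand $\partial_t\Psi=\partial_t\Phi/w-\Phi\,\partial_t w/w^2$, then match against a direct expansion of $\mathrm{div}_g(u^{-\alpha}k^{-2}\nabla\Psi)$ and $2k^{-2}u^{-\alpha}\Psi^{-1}|\nabla\Psi|^2$. You did not carry this out, and since the curvature evolution feeds into $\partial_t\Phi$, the sign error above would also contaminate this step unless corrected. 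Fix $\partial_t k=\Phi k^2-\Delta_g\Phi$ first, then the rest of your plan is viable.
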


\begin{proof}
It is easy to get the first three evolution equations, and we omit
here. Direct calculation results in
\begin{eqnarray*}
\Phi_{\xi}=-\alpha \Phi u^{-1}u_{\xi}-\Phi k^{-1}k_{\xi},
\end{eqnarray*}
and
\begin{eqnarray*}
\Phi_{\xi\xi}=\alpha(\alpha+1)\Phi u^{-2}u_{\xi}^{2}+2\alpha
u^{-1}\Phi k^{-1}u_{\xi}k_{\xi}-\alpha u^{-1}\Phi u_{\xi\xi}-\Phi
k^{-1}k_{\xi\xi}+2\Phi k^{-2}k_{\xi\xi}^{2}.
\end{eqnarray*}
According to the definition of curvature, we have
\begin{eqnarray*}
k=g^{\xi\xi}\left<X_{\xi\xi},\nu\right>_{L},
\end{eqnarray*}
and then
\begin{eqnarray*}
\partial_{t}k=\frac{\partial}{\partial t}g^{\xi\xi}\cdot\left<X_{\xi\xi},\nu\right>_{L}+g^{\xi\xi}\cdot\frac{\partial}{\partial
t}\left<X_{\xi\xi},\nu\right>_{L}.
\end{eqnarray*}
Since
\begin{eqnarray*}
\frac{\partial}{\partial
t}\left<X_{\xi\xi},\nu\right>_{L}=-\Phi_{\xi\xi}-\Phi
k^{2}g_{\xi\xi},
\end{eqnarray*}
so
\begin{eqnarray*}
\partial_{t}k=-\Delta\Phi + \Phi k^{2}.
\end{eqnarray*}
Thus,
\begin{eqnarray*}
\begin{split}
\partial_{t}k&=- \alpha(\alpha+1)\Phi u^{-2}|\nabla u|^{2}+\alpha u^{-1}\Phi\Delta u + \Phi k^{2} \\
&- 2\Phi k^{-2}|\nabla k|^{2}- 2\alpha u^{-1}\Phi
k^{-1}u_{\xi}\nabla^{\xi}k + \Phi k^{-1}\Delta k .
\end{split}
\end{eqnarray*}
Clearly,
\begin{eqnarray*}
\partial_{t}w=-\Phi-\alpha\Phi u^{-1}\nabla ^{\xi}u\left<X,X_{\xi}\right>_{L}-\Phi k^{-1}\nabla ^{\xi}k\left<X,X_{\xi}\right>_{L},
\end{eqnarray*}
by the calculation, we have
\begin{eqnarray*}
w_{\xi}=-k\left<X,X_{\xi}\right>_{L},
\end{eqnarray*}
\begin{eqnarray*}
w_{\xi\xi}=-kg_{\xi\xi} -k_{\xi}\left<X,X_{\xi}\right>_{L} +
k^{2}g_{\xi\xi}w.
\end{eqnarray*}
Thus,
\begin{eqnarray*}
\Delta w=-k -\nabla^{\xi}k\left<X,X_{\xi}\right>_{L} + k^{2}w,
\end{eqnarray*}
and
\begin{eqnarray*}
\partial_{t} w= -u^{-\alpha}w -\alpha u^{-\alpha-1}k^{-1}\nabla^{\xi}u\left<X,X_{\xi}\right>_{L} + u^{-\alpha}k^{-2}\Delta w.
\end{eqnarray*}
Hence
\begin{eqnarray*}
\begin{split}
\frac{\partial\Psi}{\partial t}&=\alpha\frac{1}{u^{1+\alpha}}\frac{1}{kw}\frac{1}{u^{\alpha-1}kw}-\frac{1}{u^{\alpha}k^{2}}\frac{1}{w}\partial_{t}k -\frac{1}{u^{\alpha}k}\frac{1}{w^{2}}\partial_{t}w\\
&=\alpha u^{-2\alpha}k^{-2}w^{-2} + \alpha(\alpha+1)u^{-2\alpha-2}k^{-3}w^{-1}|\nabla u|^{2} + 2u^{-2\alpha}k^{-5}w^{-1}|\nabla k|^{2}\\
&+ 2\alpha u^{-2\alpha-1}k^{-4}w^{-1}u_{\xi}\nabla^{\xi}k - \alpha u^{-2\alpha-1}k^{-3}w^{-1}\Delta u - u^{-2\alpha}k^{-4}w^{-1}\Delta k\\
&- u^{-2\alpha}k^{-3}w^{-2}\Delta w +\alpha
u^{-2\alpha-1}k^{-2}w^{-2}\nabla^{\xi}u\left<X,X_{\xi}\right>_{L}.
\end{split}
\end{eqnarray*}
In order to prove \eqref{div-1}, we calculate
\begin{eqnarray*}
\Psi_{\xi}=-\alpha u^{-\alpha-1}k^{-1}w^{-1}u_{\xi} -
u^{-\alpha}k^{-2}w^{-1}k_{\xi}- u^{-\alpha}k^{-1}w^{-2}w_{\xi}.
\end{eqnarray*}
and
\begin{eqnarray*}
\begin{split}
\Psi_{\xi\xi}&=\alpha(\alpha+1) u^{-\alpha-2}k^{-1}w^{-1}u_{\xi}^{2} + 2u^{-\alpha}k^{-3}w^{-1}k_{\xi}^{2} + 2u^{-\alpha}k^{-1}w^{-3}w_{\xi}^{2}\\
&+2\alpha u^{-\alpha-1}k^{-2}w^{-1}u_{\xi}k_{\xi} + 2u^{-\alpha}k^{-2}w^{-2}k_{\xi}w_{\xi} + 2\alpha u^{-\alpha-1}k^{-1}w^{-2}w_{\xi}u_{\xi}\\
&-\alpha u^{-\alpha-1}k^{-1}w^{-1}u_{xx} -
u^{-\alpha}k^{-2}w^{-1}k_{\xi\xi} -
u^{-\alpha}k^{-1}w^{-2}w_{\xi\xi}.
\end{split}
\end{eqnarray*}
Therefore,
\begin{eqnarray*}
\begin{split}
u^{-\alpha}k^{-2}\Delta\Psi&=\alpha(\alpha+1) u^{-2\alpha-2}k^{-3}w^{-1}|\nabla u|^{2} + 2u^{-2\alpha}k^{-5}w^{-1}|\nabla k|^{2} + 2u^{-2\alpha}k^{-3}w^{-3}|\nabla w|^{2}\\
&+2\alpha u^{-2\alpha-1}k^{-4}w^{-1}u_{\xi}\nabla^{\xi}k + 2u^{-2\alpha}k^{-4}w^{-2}w_{\xi}\nabla^{\xi}k + 2\alpha u^{-2\alpha-1}k^{-3}w^{-2}w_{\xi}\nabla^{\xi}u\\
&-\alpha u^{-2\alpha-1}k^{-3}w^{-1}\Delta u -
u^{-2\alpha}k^{-4}w^{-1}\Delta k - u^{-2\alpha}k^{-3}w^{-2}\Delta w.
\end{split}
\end{eqnarray*}
So we have
\begin{eqnarray*}
\begin{split}
{\rm div}(u^{-\alpha}k^{-2}\nabla\Psi) &= \alpha(2\alpha+1)u^{-2\alpha-2}k^{-3}w^{-1}|\nabla u|^{2} + 4u^{-2\alpha}k^{-5}w^{-1}|\nabla k|^{2} + 2u^{-2\alpha}k^{-3}w^{-3}|\nabla w|^{2}\\
&+ 5\alpha u^{-2\alpha-1}k^{-4}w^{-1}u_{\xi}\nabla^{\xi}k +4u^{-2\alpha}k^{-4}w^{-2}w_{\xi}\nabla^{\xi}k + 3\alpha u^{-2\alpha-1}k^{-3}w^{-2}w_{\xi}\nabla^{\xi}u\\
&- \alpha u^{-2\alpha-1} k^{-3}w^{-1}\Delta u - u^{-2\alpha}
k^{-4}w^{-1}\Delta k - u^{-2\alpha} k^{-3}w^{-2}\Delta w.
\end{split}
\end{eqnarray*}
and
\begin{eqnarray*}
\begin{split}
2k^{-1}w|\nabla\Psi|^{2} &= 2\alpha^{2}u^{-2\alpha-2}k^{-3}w^{-1}|\nabla u|^{2} + 2u^{-2\alpha}k^{-5}w^{-1}|\nabla k|^{2} + 2u^{-2\alpha}k^{-3}w^{-3}|\nabla w|^{2}\\
&+ 4\alpha u^{-2\alpha-1}k^{-4}w^{-1}u_{\xi}\nabla^{\xi}k
+4u^{-2\alpha}k^{-4}w^{-2}w_{\xi}\nabla^{\xi}k + 4\alpha
u^{-2\alpha-1}k^{-3}w^{-2}w_{\xi}\nabla^{\xi}u.
\end{split}
\end{eqnarray*}
In sum, we have
\begin{eqnarray*}
\begin{split}
&\frac{\partial\Psi}{\partial t} - \mathrm{div}_{g}\left(u^{-\alpha}k^{-2}\nabla\Psi\right) + 2k^{-1}w|\nabla\Psi|^{2}\\
&= \alpha u^{-2\alpha}k^{-2}w^{-2} + \alpha u^{-2\alpha-1}k^{-4}w^{-1}u_{\xi}\nabla^{\xi}k + \alpha u^{-2\alpha-1}k^{-3}w^{-2}w_{\xi}\nabla^{\xi}u\\
&\qquad +\alpha^{2}u^{-2\alpha-2}k^{-3}w^{-1}|\nabla u|^{2} + \alpha u^{-2\alpha-1}k^{-2}w^{-2}\nabla^{\xi}u\left<X,X_{\xi}\right>_{L}\\
&= \alpha\Psi^{2} - \alpha
u^{-\alpha-1}k^{-2}u_{\xi}\nabla^{\xi}\Psi +
\alpha\Psi^{2}u^{-1}\nabla^{\xi}u\left<X,X_{\xi}\right>_{L}.
\end{split}
\end{eqnarray*}
The proof is finished.
\end{proof}

Now, we define the rescaled flow by
\begin{eqnarray*}
\widetilde{X}=X\Theta^{-1}.
\end{eqnarray*}
Thus,
\begin{eqnarray*}
\widetilde{u}=u\Theta^{-1},\\
\widetilde{\varphi}=\varphi-\ln\Theta,
\end{eqnarray*}
and the rescaled mean curvature equation takes the form
\begin{eqnarray*}
\frac{\partial}{\partial t}\widetilde{u} =
-\frac{v}{\widetilde{u}^{\alpha}\widetilde{k}}\Theta^{-\alpha} +
\widetilde{u}\Theta^{-\alpha}.
\end{eqnarray*}
Defining $t=t(s)$ by the relation
\begin{eqnarray*}
\frac{dt}{ds}=\Theta^{\alpha}
\end{eqnarray*}
such that $t(0)=0$ and $t(S)=T$. Then $\widetilde{u}$ satisfies
\begin{eqnarray}\label{Eq-re}
\left\{
\begin{aligned}
&\frac{\partial}{\partial s}\widetilde{u}=-\frac{v}{\widetilde{u}^{\alpha}\widetilde{k}}+\widetilde{u}  \qquad&&~\mathrm{in}~ \partial M\times(0,S)\\
&\nabla_{\mu}\widetilde{u}=0  \qquad&&~\mathrm{on}~ \partial M\times(0,S)\\
&\widetilde{u}(\cdot,0)=\widetilde{u}_{0} \qquad &&~\mathrm{in}~M.
\end{aligned}
\right.
\end{eqnarray}

\begin{lemma}\label{lemma4.2}
Let X be a solution of \eqref{main1.1} and $\widetilde{X} =
X\Theta^{-1}$ be the rescaled solution. Then
\begin{eqnarray*}
\begin{split}
&D\widetilde{u}=Du\Theta^{-1} ,~~~~ D\widetilde{\varphi}=D\varphi , ~~~~\frac{\partial \widetilde{u}}{\partial s}=\frac{\partial u}{\partial t}\Theta^{\alpha-1} + u\Theta^{-1},\\
&\widetilde{g}_{\xi\xi}=\Theta^{-2}g_{\xi\xi},~~~~\widetilde{g}^{\xi\xi}=\Theta^{2}g^{\xi\xi},~~~~\widetilde{k}=k\Theta.
\end{split}
\end{eqnarray*}
\end{lemma}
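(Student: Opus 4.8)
Lemma \ref{lemma4.2} is a bookkeeping statement: it records how the basic geometric quantities transform under the parabolic rescaling $\widetilde X = X\Theta^{-1}$ together with the time change $dt/ds = \Theta^\alpha$. The plan is to derive each identity by a direct differentiation, using only the definitions $\widetilde u = u\Theta^{-1}$, $\widetilde\varphi = \varphi - \ln\Theta$, and the fact that $\Theta = \Theta(t,c)$ depends on $t$ alone (so $D\Theta = 0$ on $M$). Since there is genuinely no obstacle here, the ``main obstacle'' is merely to keep the chain rule for the time change straight.

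\medskip

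First I would treat the spatial identities, which are immediate. Since $\Theta$ is independent of the spatial variable $\xi$, applying the covariant connection $D$ on $\mathscr{H}^1(1)$ gives $D\widetilde u = \Theta^{-1} Du$ at once, and $D\widetilde\varphi = D\varphi - D(\ln\Theta) = D\varphi$. For the induced metric, Lemma \ref{lemma2-1}(ii) gives $g_{\xi\xi} = u^2\sigma_{\xi\xi} - u_\xi^2$; substituting $u = \Theta\widetilde u$, $u_\xi = \Theta\widetilde u_\xi$ yields $g_{\xi\xi} = \Theta^2(\widetilde u^2\sigma_{\xi\xi} - \widetilde u_\xi^2) = \Theta^2\widetilde g_{\xi\xi}$, i.e. $\widetilde g_{\xi\xi} = \Theta^{-2} g_{\xi\xi}$, and inverting, $\widetilde g^{\xi\xi} = \Theta^2 g^{\xi\xi}$. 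For the curvature I would use the formula in Lemma \ref{lemma2-1}(iii), $k = (u u_{\xi\xi} + u^2 - 2u_\xi^2)/(v^3 u^3)$, noting that $v = \sqrt{1 - u^{-2}|Du|^2} = \sqrt{1 - \widetilde u^{-2}|D\widetilde u|^2}$ is scale-invariant (it depends only on the ratio $u_\xi/u$); each of the three terms in the numerator scales like $\Theta^2$ and the denominator like $\Theta^3$, so $k = \Theta^{-1}\widetilde k$, that is $\widetilde k = \Theta k$.

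\medskip

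Finally I would compute the time derivative. Writing $\widetilde u = u\,\Theta^{-1}$ and using the chain rule for $t = t(s)$ with $dt/ds = \Theta^\alpha$,
\begin{eqnarray*}
\frac{\partial\widetilde u}{\partial s} = \frac{\partial}{\partial s}\left(u\,\Theta^{-1}\right) = \frac{\partial u}{\partial t}\,\frac{dt}{ds}\,\Theta^{-1} + u\,\frac{d}{ds}\left(\Theta^{-1}\right) = \frac{\partial u}{\partial t}\,\Theta^{\alpha-1} + u\,\frac{d}{ds}\left(\Theta^{-1}\right).
\end{eqnarray*}
It remains to check that $\frac{d}{ds}(\Theta^{-1}) = \Theta^{-1}$. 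From the definition $\Theta(t,c) = \{-\alpha t + e^{\alpha c}\}^{1/\alpha}$ in Lemma \ref{estimate1} one has $\Theta^\alpha = -\alpha t + e^{\alpha c}$, hence $\alpha\Theta^{\alpha-1}\,\dot\Theta = -\alpha$ (dot denoting $d/dt$), so $\dot\Theta = -\Theta^{1-\alpha}$ and therefore $\frac{d}{dt}(\Theta^{-1}) = -\Theta^{-2}\dot\Theta = \Theta^{-1-\alpha}$; multiplying by $dt/ds = \Theta^\alpha$ gives $\frac{d}{ds}(\Theta^{-1}) = \Theta^{-1}$, as required. Combining, $\frac{\partial\widetilde u}{\partial s} = \frac{\partial u}{\partial t}\,\Theta^{\alpha-1} + u\,\Theta^{-1}$, which is the last identity; this is also precisely what turns the flow equation in \eqref{Eq2} into the rescaled equation \eqref{Eq-re}, providing a consistency check. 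This completes the proof.
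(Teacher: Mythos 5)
Your proposal is correct and is exactly what the paper means by ``these relations can be computed directly'': spatial identities follow since $\Theta$ depends only on $t$, the metric and curvature scalings come from substituting $u=\Theta\widetilde u$ into Lemma \ref{lemma2-1}(ii)--(iii) together with the scale-invariance of $v$, and the $s$-derivative identity is a clean chain-rule computation using $\Theta^\alpha=-\alpha t+e^{\alpha c}$ and $dt/ds=\Theta^\alpha$. No gaps.
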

\begin{proof}
These relations can be computed directly.
\end{proof}

\begin{lemma}\label{lemma4.3}
Let u be a solution to the parabolic system \eqref{Evo-1}, where
$\varphi(x,t)=\ln u(x,t)$, and $\Sigma$ be the boundary of a smooth,
convex cone described as in Theorem \ref{main1.1}. Then there exist
some $0< \beta <1$ and some $C > 0$ such that the rescaled function
$\widetilde{u}(x(\xi),s):= u(x(\xi),t(s))\Theta^{-1}(t(s))$
satisfies
\begin{eqnarray}\label{holder}
[D\widetilde{u}]_{\beta} + \left[\frac{\partial
\widetilde{u}}{\partial s}\right]_{\beta} + [\widetilde{k}]_{\beta}
\leq C (||u_{0}|| _{C^{2+\gamma ,1+\frac{\gamma}{2}}(M)},\beta,M),
\end{eqnarray}
where $[f]_{\beta}:=[f]_{x,\beta}+[f]_{s,\frac{\beta}{2}}$ is the
sum of the H\"{o}lder coefficients of $f$ in $M\times [0,S]$ with
respect to $x$ and $s$.
\end{lemma}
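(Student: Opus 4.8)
The plan is to establish the Hölder estimate in \eqref{holder} by combining the a priori $C^0$, $\dot\varphi$ and gradient estimates already obtained in Section \ref{se3} with a parabolic De Giorgi--Nash--Moser type argument, using crucially the divergence-form evolution equation \eqref{div-1} for $\Psi=\Phi/w$ proved in Lemma \ref{lemma4.1}. First I would record, via Lemma \ref{lemma4.2}, that the rescaled quantities $\widetilde u$, $\widetilde k$, $\partial_s\widetilde u$ are all uniformly bounded above and below by positive constants depending only on $\|u_0\|_{C^{2+\gamma,1+\gamma/2}(M)}$: boundedness of $\widetilde u=u\Theta^{-1}$ is exactly Lemma \ref{estimate1}, boundedness of $\widetilde k=k\Theta$ is the Corollary following Lemma \ref{estimate3}, and boundedness of $\partial_s\widetilde u=\dot\varphi\,\Theta^{\alpha-1}u+u\Theta^{-1}$ follows from Lemma \ref{estimate2} together with the $C^0$ bound. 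The gradient estimate Lemma \ref{estimate3} gives $|D\widetilde\varphi|=|D\varphi|\le \sup_M|D\varphi(\cdot,0)|<1$, hence $v$ is bounded away from $0$ and $\infty$; combined with the $C^0$ bound this makes the coefficient $u^{-\alpha}k^{-2}$ in the divergence operator in \eqref{div-1} uniformly elliptic (after rescaling, $\widetilde u^{-\alpha}\widetilde k^{-2}$ is between two positive constants), and makes $\Psi$ itself comparable to a positive constant.

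Next, I would rewrite \eqref{div-1} for the rescaled flow. Since $\widetilde\Psi$ (the rescaled analogue of $\Psi$) is, up to the scaling factors computed in Lemma \ref{lemma4.2}, essentially $-\partial_s\widetilde u\cdot(\text{bounded positive factor})$ — indeed $\Psi=\Phi/w$ with $\Phi=1/(|X|^\alpha k)$ and $w=\langle X,\nu\rangle_L$, both of which are under control — a uniform Hölder bound for $\widetilde\Psi$ in $M\times[0,S]$ translates into one for $\partial_s\widetilde u$. The equation \eqref{div-1} has the structure
\begin{eqnarray*}
\partial_s\widetilde\Psi=\mathrm{div}_{\widetilde g}\!\left(a\,\widetilde\nabla\widetilde\Psi\right)+\langle b,\widetilde\nabla\widetilde\Psi\rangle+f,
\end{eqnarray*}
where $a=\widetilde u^{-\alpha}\widetilde k^{-2}$ is uniformly elliptic and bounded, and the lower-order terms $b$, $f$ (coming from the $-2k^{-2}u^{-\alpha}\Psi^{-1}|\nabla\Psi|^2$, $\alpha\Psi^2$, $-\alpha u^{-\alpha-1}k^{-2}u_\xi\nabla^\xi\Psi$, and $\alpha\Psi^2 u^{-1}\nabla^\xi u\langle X,X_\xi\rangle_L$ terms) are bounded once one knows $\widetilde\Psi$, $\widetilde u$, $\widetilde k$ and $|D\widetilde\varphi|$ are bounded — note the troublesome $|\widetilde\nabla\widetilde\Psi|^2$ term can be absorbed by writing it as $\mathrm{div}$ of something or handled in the standard way for energy estimates since $\Psi^{-1}$ is bounded. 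Because the domain $M$ is one-dimensional with the single metric component $\sigma_{\xi\xi}=1$ (Lemma \ref{lemma2-1}), all the geometric quantities are as simple as possible. With the vanishing Neumann condition $\nabla_\mu\widetilde\Psi=0$ on $\partial M\times(0,S)$ (which follows by differentiating the NBC, exactly as in the proofs of Lemmas \ref{estimate2} and \ref{estimate3}), the interior-plus-boundary De Giorgi--Nash--Moser estimate for divergence-form parabolic equations with bounded measurable coefficients (e.g. Ladyzhenskaya--Solonnikov--Uraltseva, or Lieberman for the Neumann case) yields a uniform $C^{\beta,\beta/2}$ bound on $\widetilde\Psi$, hence on $\partial_s\widetilde u$, for some $0<\beta<1$ and $C$ depending only on the stated data.

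Then the Hölder bound on $D\widetilde u$ is obtained by bootstrapping: $D\widetilde\varphi=D\varphi$ satisfies, by \eqref{gra} (or rather its rescaled version), a parabolic equation of the same divergence type with now-Hölder coefficients, so Schauder-type or again De Giorgi interior estimates give $[D\widetilde u]_\beta\le C$; alternatively, one integrates the rescaled scalar equation $\partial_s\widetilde u=-v\widetilde u^{-\alpha}\widetilde k^{-1}+\widetilde u$ and, using the just-obtained Hölder bounds on $\partial_s\widetilde u$, $\widetilde u$ and $\widetilde k$, solves algebraically for $v$ — hence for $|D\widetilde\varphi|^2=\widetilde u^2(1-v^2)$ — to see that $|D\widetilde u|$ inherits a Hölder bound. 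Finally $[\widetilde k]_\beta\le C$ follows either directly from the curvature formula $\widetilde k=(\varphi_{\xi\xi}+v^2)e^{-\varphi}\Theta/v^3$ combined with a Hölder bound on $\varphi_{\xi\xi}$ extracted from the scalar equation, or, more cleanly, from the fact that $\widetilde k$ is an explicit smooth function of $\widetilde u$, $\partial_s\widetilde u$ and $v$, all of which are now Hölder. I expect the main obstacle to be the careful treatment of the boundary: one must check that the vanishing Neumann condition is genuinely preserved for $\widetilde\Psi$ (and for $D\widetilde\varphi$) so that the boundary De Giorgi/Moser estimate applies, and that the lower-order terms in \eqref{div-1}, especially the quadratic gradient term $-2k^{-2}u^{-\alpha}\Psi^{-1}|\nabla\Psi|^2$, are handled correctly in the energy estimates — though in one spatial dimension with bounded $\Psi^{-1}$ this is routine. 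The rest is a matter of invoking the standard quasilinear parabolic regularity theory with the uniform bounds supplied by Section \ref{se3}.
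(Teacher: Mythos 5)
Your proposal correctly identifies the heart of the argument: the divergence-form parabolic equation \eqref{div-1} for $\Psi$ from Lemma \ref{lemma4.1}, the uniform bounds from Section \ref{se3} that make its principal coefficient $\widetilde u^{-\alpha}\widetilde k^{-2}$ uniformly elliptic, the vanishing Neumann condition that makes the boundary estimate work, and the De Giorgi class / energy argument (with test function $\zeta^2\widetilde w$) leading to $[\widetilde w]_\beta\le C$ and hence $[\partial_s\widetilde u]_\beta\le C$. You also correctly see that $[\widetilde k]_\beta$ falls out at the end from the explicit formula $\widetilde k=\sqrt{1-|D\varphi|^2}/(\widetilde u^{\alpha+1}\widetilde w)$. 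This matches the paper's Steps~2 and~3.

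However, there is a genuine gap in your route to $[D\widetilde u]_{x,\beta}$. Neither of the two alternatives you offer is sound as stated. The ``bootstrap via \eqref{gra}'' fails because \eqref{gra} is an equation for $\psi=|D\varphi|^2/2$, not for $D\varphi$, and it contains the term $-Q^{\xi\xi}\varphi_{\xi\xi}^{2}$, which is a priori unbounded: it has a favorable \emph{sign}, which is exactly what the maximum principle in Lemma \ref{estimate3} uses to get a pointwise bound, but a signed, unbounded, non-divergence zeroth-order term is not enough for a De Giorgi/Moser or Schauder Hölder estimate. The ``algebraic'' route is both circular and lossy: from the scalar equation you can isolate $v/\widetilde k$ (or $\widetilde w=v/(\widetilde u^{\alpha+1}\widetilde k)$), but you cannot separate $v$ and $\widetilde k$ without an independent Hölder bound on one of them, which is exactly what you are trying to produce; and even granted $v=\sqrt{1-|D\varphi|^2}$ Hölder you would only obtain $|D\varphi|^2$ Hölder, which does not by itself give $D\varphi$ Hölder (sign issue near zeros of $\varphi_\xi$). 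Also note the small slip $|D\widetilde\varphi|^2=1-v^2$, not $\widetilde u^2(1-v^2)$. The paper instead obtains $[D\widetilde\varphi]_{x,\beta}$ by an \emph{elliptic}, not parabolic, argument: freezing $s$, the scalar equation \eqref{Evo-1} is rewritten in divergence form
\begin{equation*}
-\operatorname{div}_\sigma\!\left(\frac{D\widetilde\varphi}{\sqrt{1-|D\widetilde\varphi|^2}}\right)
=\frac{1}{\sqrt{1-|D\widetilde\varphi|^2}}
+e^{-\alpha\widetilde\varphi}\,\frac{\sqrt{1-|D\widetilde\varphi|^2}}{\widetilde\varphi_s-1},
\end{equation*}
with bounded right-hand side and uniformly elliptic quasilinear operator $D_\xi(a^\xi(D\widetilde\varphi))$ (uniform ellipticity coming from the strict gradient bound $|D\widetilde\varphi|<1$), and elliptic De Giorgi--Nash--Moser with the Neumann boundary condition gives $[D\widetilde\varphi]_{x,\beta}$; the time Hölder continuity $[D\widetilde u]_{s,\beta/2}$ then follows from a standard interpolation lemma in Ladyzhenskaya--Solonnikov--Uraltseva together with the already-known bounds on $[\widetilde u]_{s,\beta/2}$ and $[D\widetilde u]_{x,\beta}$. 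Without this elliptic step your argument has no correct way to reach $[D\widetilde u]_\beta$, and consequently the final step for $\widetilde k$ (which needs $v$ Hölder) is also left unsupported.
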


\begin{proof}
We divide our proof into  three steps\footnote{~In the proof of
Lemma \ref{lemma4.3}, the constant $C$ may differ from each other.
However, we abuse the symbol $C$ for the purpose of convenience.}.

\textbf{Step 1:} We need to prove that
\begin{equation*}
  [D \widetilde{u}]_{x,\beta}+[D \widetilde{u}]_{s,\frac{\beta}{2}}\leq C(|| u_0||_{ C^{2+\gamma,1+\frac{\gamma}{2}}(M)}, \beta, M).
\end{equation*}
According to Lemmas \ref{estimate1}, \ref{estimate2} and
\ref{estimate3}, it follows that
$$|D \widetilde{u}|+\left|\frac{\partial \widetilde{u}}{\partial s}\right|\leq C(|| u_0||_{ C^{2+\gamma,1+\frac{\gamma}{2}}(M)}, M).$$
Then we can easily obtain the bound of $[\widetilde{u}]_{x,\beta}$
and $[\widetilde{u}]_{s,\frac{\beta}{2}}$ for any $0<\beta<1$. Lemma
\ref{estimate1} in \cite[Chap. 2]{La2} implies that the bound for
$[D\widetilde{u}]_{s,\frac{\beta}{2}}$ follows from a bound for
$[\widetilde{u}]_{s,\frac{\beta}{2}}$ and
$[D\widetilde{u}]_{x,\beta}$. Hence it remains to bound
$[D\widetilde{\varphi}]_{x,\beta}$ since
$D\widetilde{u}=\widetilde{u}D\widetilde{\varphi}$. For this, fix
$s$ and the equation \eqref{Evo-1} can be rewritten as an elliptic
Neumann problem
 \begin{equation}\label{key1}
    -\mbox{div}_{\sigma}\left(\frac{D \widetilde{\varphi}}{\sqrt{1-|D\widetilde{\varphi}|^2}}\right)=\frac{1}{\sqrt{1-|D\widetilde{\varphi}|^2}}+ e^{-\alpha \widetilde{\varphi}} \frac{\sqrt{1-|D\widetilde{\varphi}|^2}}{\widetilde{\varphi}_{s}-1}.
 \end{equation}
In fact, the equation \eqref{key1} is of the form
$D_{\xi}(a^{\xi}(p)+a(\xi,s))=0$. Since $\dot{\widetilde{\varphi}}$
and $|D\widetilde{\varphi}|$ are bounded, we know $a$ is a bounded
function in $\xi$ and $s$. We define $a^{\xi\xi}(p):=\frac{\partial
a^{\xi}}{\partial p^{\xi}}$, the smallest and largest eigenvalues of
$a^{\xi\xi}$ are controlled due to the estimate for
$|D\widetilde{\varphi}|$. By \cite[Chap. 3; Theorem 14.1; Chap. 10,
$\S$2]{La1}, we can get the interior estimate and boundary estimate
of $[D\widetilde{\varphi}]_{x,\beta}$.

\textbf{Step 2:} The next thing to do is to show that
\begin{equation*}
  \left[\frac{\partial \widetilde{u}}{\partial s}\right]_{x,\beta}+\left[\frac{\partial \widetilde{u}}{\partial s}\right]_{s,\frac{\beta}{2}}\leq
  C(||u_0||_{ C^{2+\gamma,1+\frac{\gamma}{2}}(M)}, \beta,
  M).
\end{equation*}
  As $\frac{\partial}{\partial s}\widetilde{u}=\widetilde{u}\left(-\frac{v}{\widetilde{u}^{\alpha+1}\widetilde{k}}+1\right)$, it is enough to bound
  $\left[\frac{v}{\widetilde{u}^{\alpha+1} \widetilde{k}}\right]_{\beta}$.
Set $\widetilde{w}(s):=\frac{v}{\widetilde{u}^{\alpha+1}
\widetilde{k}}= \Theta^{\alpha}\Psi$. Let $\widetilde{\nabla}$ be
the Levi-Civita connection of
$\widetilde{M}_{s}:=\widetilde{X}(M,s)$ w.r.t. the metric
$\widetilde{g}$. Combining with \eqref{div-1} and Lemma
\ref{lemma4.2}, we get
\begin{equation}\label{div-form-02}
\begin{split}
\frac{\partial \widetilde{w}}{\partial s} &=\mbox{div}_{\widetilde{g}} (\widetilde{u}^{-\alpha} \widetilde{k}^{-2} \widetilde{\nabla} \widetilde{w})-2 \widetilde{k}^{-2} \widetilde{u}^{-\alpha} \widetilde{w}^{-1} |\widetilde{\nabla} \widetilde{w}|^2_{\widetilde{g}}\\
&-\alpha\widetilde{w}+\alpha \widetilde{w}^2+ \alpha \widetilde{w}^2
P- \alpha  \widetilde{u}^{-\alpha-1} \widetilde{k}^{-2}
\widetilde{\nabla}_\xi\widetilde{u}
\widetilde{\nabla}^\xi\widetilde{w},
\end{split}
\end{equation}
where $P:=u^{-1}\nabla^\xi u\langle X, X_\xi \rangle_{L}$. Applying
Lemmas \ref{estimate1} and \ref{estimate3}, we have
$$|P|\leq |\nabla u|_g\leq C,$$
where $C$ depends only on $\sup\limits_{M}|Du(\cdot,0)|$, $c_{1}$
and $c_{2}$. The weak formulation of (\ref{div-form-02}) is
\begin{equation}\label{div-form-03}
\begin{split}
\int_{s_0}^{s_1} \int_{\widetilde{M}_s}  \frac{\partial
\widetilde{w} }{\partial s}  \eta d\mu_s ds & =\int_{s_0}^{s_1}
\int_{\widetilde{M}_s} \mbox{div}_{\widetilde{g}}
(\widetilde{u}^{-\alpha} \widetilde{k}^{-2} \widetilde{\nabla}
\widetilde{w}) \eta
-2 \widetilde{k}^{-2} \widetilde{u}^{-\alpha} \widetilde{w}^{-1} |\widetilde{\nabla} \widetilde{w}|^2_{\widetilde{g}} \eta d\mu_s ds\\
&+\int_{s_0}^{s_1} \int_{\widetilde{M}_s}
(-\alpha\widetilde{w}+\alpha \widetilde{w}^2+ \alpha \widetilde{w}^2
P -\alpha  \widetilde{u}^{-\alpha-1} \widetilde{k}^{-2}
\widetilde{\nabla}_\xi\widetilde{u}
\widetilde{\nabla}^\xi\widetilde{w}) \eta d\mu_s ds.
\end{split}
\end{equation}
Since $\nabla_{\mu} \widetilde{\varphi}=0$, the boundary integrals
all vanish, the interior and boundary estimates are basically the
same. We define the test function $\eta:=\zeta^2 \widetilde{w}$,
where $\zeta$ is a smooth function with values in $[0,1]$ and is
supported in a small parabolic neighborhood. Then
\begin{equation}\label{imcf-hec-for-02}
\begin{split}
\int_{s_0}^{s_1} \int_{\widetilde{M}_s}  \frac{\partial
\widetilde{w} }{\partial s}  \zeta^2 \widetilde{w} d\mu_s ds=
\frac{1}{2}||\widetilde{w} \zeta||_{2,\widetilde{M}_s}^2
\Big{|}_{s_0}^{s_1}-\int_{s_0}^{s_1} \int_{\widetilde{M}_s}  \zeta
\dot{\zeta} \widetilde{w}^2 d\mu_s ds,
\end{split}
\end{equation}
where $\dot{\zeta}:=\frac{\partial\xi}{\partial s}$. We have
\begin{equation*}
\begin{split}
&\int_{s_0}^{s_1} \int_{\widetilde{M}_s}  \mbox{div}_{\widetilde{g}} (\widetilde{u}^{-\alpha} \widetilde{k}^{-2} \widetilde{\nabla} \widetilde{w})  \zeta^2 \widetilde{w}d\mu_sds\\
&=\int_{s_0}^{s_1} \int_{\widetilde{M}_s} \mbox{div}_{\widetilde{g}}
(\widetilde{u}^{-\alpha} \widetilde{k}^{-2} \widetilde{\nabla}
\widetilde{w}\zeta^2 \widetilde{w}) d\mu_sds
-\int_{s_0}^{s_1} \int_{\widetilde{M}_s}   \widetilde{u}^{-\alpha} \widetilde{k}^{-2} \zeta^2\widetilde{\nabla}_\xi \widetilde{w}  \widetilde{\nabla}^\xi\widetilde{w}  d\mu_sds\\
&-2\int_{s_0}^{s_1} \int_{\widetilde{M}_s}\widetilde{u}^{-\alpha} \widetilde{k}^{-2} \zeta \widetilde{w}\widetilde{\nabla}_\xi\widetilde{w} \widetilde{\nabla}^\xi \zeta d\mu_sds.\\
\end{split}
\end{equation*}
Using the divergence theorem, we have
\begin{equation*}
\int_{s_0}^{s_1} \int_{\widetilde{M}_s}  \mbox{div}_{\widetilde{g}}
(\widetilde{u}^{-\alpha} \widetilde{k}^{-2} \widetilde{\nabla}
\widetilde{w}\zeta^2 \widetilde{w}) d\mu_sds =-\int_{s_0}^{s_1}
\int_{\partial\widetilde{M}_s} \widetilde{g}(\mu ,
\widetilde{u}^{-\alpha} \widetilde{k}^{-2} \widetilde{\nabla}
\widetilde{w}\zeta^2 \widetilde{w}) d\mu_sds=0.
\end{equation*}
Thus,
\begin{equation*}
\begin{split}
&\int_{s_0}^{s_1} \int_{\widetilde{M}_s}  \mbox{div}_{\widetilde{g}} (\widetilde{u}^{-\alpha} \widetilde{k}^{-2} \widetilde{\nabla} \widetilde{w})  \zeta^2 \widetilde{w}  d\mu_sds\\
&=-\int_{s_0}^{s_1} \int_{\widetilde{M}_s}   \widetilde{u}^{-\alpha}
\widetilde{k}^{-2} \zeta^2\widetilde{\nabla}_\xi \widetilde{w}
\widetilde{\nabla}^\xi\widetilde{w}  d\mu_sds -2\int_{s_0}^{s_1}
\int_{\widetilde{M}_s}\widetilde{u}^{-\alpha} \widetilde{k}^{-2}
\zeta \widetilde{w}\widetilde{\nabla}_\xi\widetilde{w}
\widetilde{\nabla}^\xi \zeta d\mu_sds.
\end{split}
\end{equation*}
Since
\begin{equation*}
\begin{split}
&-\int_{s_0}^{s_1} \int_{\widetilde{M}_s}  \widetilde{u}^{-\alpha} \widetilde{k}^{-2}\widetilde{g}^{\xi\xi}\left(\zeta\widetilde{\nabla}_\xi \widetilde{w}+\widetilde{w}\widetilde{\nabla}_\xi \zeta\right)^{2}  d\mu_sds\\
&=-\int_{s_0}^{s_1} \int_{\widetilde{M}_s}  \left(
\widetilde{u}^{-\alpha} \widetilde{k}^{-2}
\zeta^2\widetilde{\nabla}_\xi \widetilde{w}
\widetilde{\nabla}^\xi\widetilde{w}
 + 2\widetilde{u}^{-\alpha} \widetilde{k}^{-2} \zeta \widetilde{w}\widetilde{\nabla}_\xi\widetilde{w} \widetilde{\nabla}^\xi \zeta + \widetilde{u}^{-\alpha} \widetilde{k}^{-2} |\widetilde{\nabla} \zeta|^2\widetilde{w}^2 \right)d\mu_sds\\
\end{split}
\end{equation*}
is negative, so we can obtain
\begin{equation}\label{imcf-hec-for-03}
\begin{split}
&\int_{s_0}^{s_1} \int_{\widetilde{M}_s}  \mbox{div}_{\widetilde{g}} (\widetilde{u}^{-\alpha} \widetilde{k}^{-2} \widetilde{\nabla} \widetilde{w})  \zeta^2 \widetilde{w}  d\mu_sds\\
&=-\int_{s_0}^{s_1} \int_{\widetilde{M}_s}   \widetilde{u}^{-\alpha}
\widetilde{k}^{-2} \zeta^2\widetilde{\nabla}_\xi \widetilde{w}
\widetilde{\nabla}^\xi\widetilde{w}  d\mu_sds
-2\int_{s_0}^{s_1} \int_{\widetilde{M}_s}\widetilde{u}^{-\alpha} \widetilde{k}^{-2} \zeta \widetilde{w}\widetilde{\nabla}_\xi\widetilde{w} \widetilde{\nabla}^\xi \xi d\mu_sds\\
&\leq\int_{s_0}^{s_1} \int_{\widetilde{M}_s} \widetilde{u}^{-\alpha}
\widetilde{k}^{-2} |\widetilde{\nabla} \zeta|^2\widetilde{w}^2
d\mu_sds.
\end{split}
\end{equation}
We also have
\begin{equation*}
\begin{split}
&\int_{s_0}^{s_1} \int_{\widetilde{M}_s}(-\alpha\widetilde{w}+\alpha \widetilde{w}^2+ \alpha  \widetilde{w}^2 P- \alpha  \widetilde{u}^{-\alpha-1} \widetilde{k}^{-2} \widetilde{\nabla}_\xi\widetilde{u} \widetilde{\nabla}^\xi\widetilde{w})  \zeta^2 \widetilde{w}  d\mu_sds\\
& \leq C|\alpha| \int_{s_0}^{s_1} \int_{\widetilde{M}_s} \zeta^2
(\widetilde{w}^2+|\widetilde{w}|^3)  d\mu_sds+ \int_{s_0}^{s_1}
\int_{\widetilde{M}_s} |\alpha|  \widetilde{u}^{-\alpha-1}
\widetilde{k}^{-2} |\widetilde{\nabla}\widetilde{u}|
|\widetilde{\nabla}\widetilde{w}|  \zeta^2 |\widetilde{w}| d\mu_sds.
\end{split}
\end{equation*}
Using Young's inequality, we can obtain
\begin{equation*}
\begin{split}
&\int_{s_0}^{s_1} \int_{\widetilde{M}_s} |\alpha|  \widetilde{u}^{-\alpha-1} \widetilde{k}^{-2} |\widetilde{\nabla}\widetilde{u}| |\widetilde{\nabla}\widetilde{w}|  \zeta^2 |\widetilde{w}|  d\mu_sds \\
&=\int_{s_0}^{s_1} \int_{\widetilde{M}_s} |\alpha|  (\widetilde{u}^{-\frac{\alpha}{2}} \widetilde{k}^{-1}|\widetilde{\nabla}\widetilde{w}|\zeta)\cdot(\widetilde{u}^{-\frac{\alpha}{2}-1} \widetilde{k}^{-1} |\widetilde{\nabla}\widetilde{u}|\zeta |\widetilde{w}|  )d\mu_sds\\
&\leq  \frac{|\alpha|}{2} \int_{s_0}^{s_1} \int_{\widetilde{M}_s}
\widetilde{u}^{-\alpha} \widetilde{k}^{-2}
|\widetilde{\nabla}\widetilde{w}|^2  \zeta^2  d\mu_sds +
\frac{|\alpha|}{2} \int_{s_0}^{s_1} \int_{\widetilde{M}_s}
\widetilde{u}^{-\alpha-2} \widetilde{k}^{-2}
|\widetilde{\nabla}\widetilde{u}|^2 \zeta^2 \widetilde{w}^2
d\mu_sds,
\end{split}
\end{equation*}
and thus
\begin{equation}\label{imcf-hec-for-04}
\begin{split}
&\int_{s_0}^{s_1} \int_{\widetilde{M}_s}(-\alpha\widetilde{w}+\alpha \widetilde{w}^2+ \alpha  \widetilde{w}^2 P- \alpha  \widetilde{u}^{-\alpha-1} \widetilde{k}^{-2} \widetilde{\nabla}_\xi\widetilde{u} \widetilde{\nabla}^\xi\widetilde{w})  \zeta^2 \widetilde{w}  d\mu_sds\\
& \leq C|\alpha| \int_{s_0}^{s_1} \int_{\widetilde{M}_s} \zeta^2 (\widetilde{w}^2+|\widetilde{w}|^3)  d\mu_sds+ \int_{s_0}^{s_1} \int_{\widetilde{M}_s} |\alpha|  \widetilde{u}^{-\alpha-1} \widetilde{k}^{-2} |\widetilde{\nabla}\widetilde{u}| |\widetilde{\nabla}\widetilde{w}|  \zeta^2 |\widetilde{w}|  d\mu_sds\\
&\leq  C|\alpha| \int_{s_0}^{s_1} \int_{\widetilde{M}_s} \zeta^2
(\widetilde{w}^2+|\widetilde{w}|^3)  d\mu_sds +
\frac{|\alpha|}{2} \int_{s_0}^{s_1} \int_{\widetilde{M}_s}  \widetilde{u}^{-\alpha} \widetilde{k}^{-2}  |\widetilde{\nabla}\widetilde{w}|^2  \zeta^2  d\mu_sds \\
&+ \frac{|\alpha|}{2} \int_{s_0}^{s_1} \int_{\widetilde{M}_s}
\widetilde{u}^{-\alpha-2} \widetilde{k}^{-2}
|\widetilde{\nabla}\widetilde{u}|^2 \zeta^2 \widetilde{w}^2
d\mu_sds.
\end{split}
\end{equation}
Combing (\ref{imcf-hec-for-02}), (\ref{imcf-hec-for-03}) and
(\ref{imcf-hec-for-04}), we have
 \begin{equation*}\label{imcf-hec-for-05}
\begin{split}
&\frac{1}{2}||\widetilde{w} \zeta||_{2,\widetilde{M}_s}^2
\mid_{s_0}^{s_1}
+(2+\frac{\alpha}{2})\int_{s_0}^{s_1} \int_{\widetilde{M}_s} \widetilde{u}^{-\alpha}\widetilde{k}^{-2} |\widetilde{\nabla} \widetilde{w}|^2  \zeta^2   d\mu_sds\\
& \leq \int_{s_0}^{s_1} \int_{\widetilde{M}_s}  \zeta |\dot{\zeta}|
w^2 d\mu_s ds
+\int_{s_0}^{s_1} \int_{\widetilde{M}_s}  \widetilde{u}^{-\alpha} \widetilde{k}^{-2} |\widetilde{\nabla} \zeta|^2\widetilde{w}^2  d\mu_sds\\
&+  C|\alpha| \int_{s_0}^{s_1} \int_{\widetilde{M}_s} \zeta^2 (
\widetilde{w}^2+|\widetilde{w}|^3)  d\mu_sds + \frac{|\alpha|}{2}
\int_{s_0}^{s_1} \int_{\widetilde{M}_s}   \widetilde{u}^{-\alpha-2}
\widetilde{k}^{-2} |\widetilde{\nabla}\widetilde{u}|^2 \zeta^2
\widetilde{w}^2  d\mu_sds,
\end{split}
\end{equation*}
 which implies
 \begin{equation}\label{imcf-hec-for-06}
\begin{split}
&\frac{1}{2}||\widetilde{w} \zeta||_{2,\widetilde{M}_s}^2
\mid_{s_0}^{s_1}
+\frac{(2+\frac{\alpha}{2})}{\max(\widetilde{u}^{\alpha}\widetilde{k}^{2}) }\int_{s_0}^{s_1} \int_{\widetilde{M}_s} |\widetilde{\nabla} \widetilde{w}|^2  \zeta^2   d\mu_sds\\
& \leq (1+ \frac{1}{\min(\widetilde{u}^{\alpha} \widetilde{k}^{2})}) \int_{s_0}^{s_1} \int_{\widetilde{M}_s}  \widetilde{w}^2 (\zeta |\dot{\zeta}| +|\widetilde{\nabla} \zeta|^2)d\mu_s ds\\
&  +  |\alpha| \left(C+ \frac{\max(|
\widetilde{\nabla}\widetilde{u}|)^2}{2\min(\widetilde{u}^{2+\alpha}
\widetilde{k}^{2}) }\right) \int_{s_0}^{s_1} \int_{\widetilde{M}_s}
\left(\zeta^2 \widetilde{w}^2 +\zeta^2 |\widetilde{w}|^3
\right)d\mu_sds.
\end{split}
\end{equation}
This means that $\widetilde{w}$ belong to the De Giorgi class of
functions in $M \times [0,S)$. Similar to the arguments in
\cite[Chap. 5, \S 1 and \S 7]{La2},  there exist  constants
$0<\beta<1$ and $C$ such that
$$[\widetilde{w}]_{\beta}\leq C ||\widetilde{w}||_{L^{\infty}(M \times [0,S))}\leq  C(|| u_0||_{ C^{2+\gamma,1+\frac{\gamma}{2}}(M)}, \beta, M).$$

\textbf{Step 3:} Finally, we have to show that
\begin{equation*}
  [ \widetilde{k}]_{x,\beta}+[\widetilde{k}]_{s,\frac{\beta}{2}}\leq C(|| u_0||_{ C^{2+\gamma,1+\frac{\gamma}{2}}(M)}, \beta, M).
\end{equation*}
This follows from the fact that
$$\widetilde{k}=\frac{\sqrt{1-|D\varphi|^2}}{\widetilde{u}^{\alpha+1} \widetilde{w}}$$
together with the estimates for $\widetilde{u}$, $\widetilde{w}$,
$D\varphi$.
\end{proof}

Then we can obtain the following higher-order estimates:
\begin{lemma}
Let $u$ be a solution to the parabolic system \eqref{Evo-1}, where
$\varphi(x,t)=\ln u(x,t)$, and $\Sigma$ be the boundary of a smooth,
convex cone described as in Theorem \ref{main1.1}. Then for any
$s_0\in (0,S)$, there exist some $0<\beta<1$ and some $C>0$ such
that
\begin{equation}\label{imfcone-holder-01}
||\widetilde{u}||_{C^{2+\beta,1+\frac{\beta}{2}}(M\times [0,S])}\leq
C(|| u_0||_{ C^{2+\gamma, 1+\frac{\gamma}{2}}(M)}, \beta, M)
\end{equation}
and for all $\ell\in \mathbb{N}$,
\begin{equation}\label{imfcone-holder-02}
||\widetilde{u}||_{C^{2\ell+\beta,\ell+\frac{\beta}{2}}(M\times
[s_0,S])}\leq C(||u_0(\cdot,
s_0)||_{C^{2\ell+\beta,\ell+\frac{\beta}{2}}(M)}, \beta, M).
\end{equation}
\end{lemma}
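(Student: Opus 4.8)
The plan is to bootstrap from the Hölder estimates of Lemma~\ref{lemma4.3} to full parabolic Schauder regularity, and then iterate. First I would observe that the rescaled equation \eqref{Eq-re} for $\widetilde{u}$ is a uniformly parabolic quasilinear equation: by Lemma~\ref{lemma4.2} the curvature term is $\widetilde{k}=k\Theta$, and using the formula for $k$ from Lemma~\ref{lemma2-1} together with the $C^0$ estimate (Lemma~\ref{estimate1}), the gradient estimate (Lemma~\ref{estimate3}, which keeps $v=\sqrt{1-u^{-2}|Du|^2}$ bounded away from $0$), and the corollary giving $0<c_3\le k\Theta\le c_4$, the principal coefficient $\partial Q/\partial\varphi_{\xi\xi}=1/(u^{2+\alpha}k^2v^2)$ is bounded above and below by positive constants. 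Hence \eqref{Eq-re} can be written in the divergence-type form used in Step~2 of Lemma~\ref{lemma4.3}, or equivalently in non-divergence form $\partial_s\widetilde u=\widetilde a^{\xi\xi}\widetilde u_{\xi\xi}+\widetilde b$, with $\widetilde a^{\xi\xi}$ and $\widetilde b$ now known to be $C^\beta$ in $(x,s)$ by \eqref{holder} (since $[D\widetilde u]_\beta$, $[\partial_s\widetilde u]_\beta$, $[\widetilde k]_\beta$ are all controlled, and $\widetilde u$ itself is Hölder by the $C^0$ bound).

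Next I would apply linear parabolic Schauder theory with the Neumann boundary condition $\nabla_\mu\widetilde u=0$. Freezing the coefficients at $C^\beta$ regularity and invoking the interior and boundary Schauder estimates for the oblique (here, purely conormal) boundary value problem --- as in \cite{La2} or the references \cite{Ge90, Ge06, Mar} already cited --- upgrades $\widetilde u$ from $C^{1+\beta,\frac{1+\beta}{2}}$ (the content of Lemma~\ref{lemma4.3}) to $C^{2+\beta,1+\frac{\beta}{2}}(M\times[0,S])$, which is exactly \eqref{imfcone-holder-01}. The constant depends only on $\|u_0\|_{C^{2+\gamma,1+\frac{\gamma}{2}}(M)}$, $\beta$, and $M$ because every ingredient estimate depends only on these, and the initial data $\widetilde u(\cdot,0)=\widetilde u_0$ is $C^{2+\gamma}$ with $\gamma$ controlling the compatibility at $\partial M$. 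One technical point: the Neumann condition must be preserved under rescaling, but this is immediate since $\nabla_\mu\widetilde\varphi=\nabla_\mu\varphi=0$ by Lemma~\ref{lemma4.2}, and the $\dot\varphi$-estimate (Lemma~\ref{estimate2}) propagates the corresponding condition for $\partial_s\widetilde u$, so the linear problem genuinely has homogeneous conormal data and no boundary layer appears.

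For \eqref{imfcone-holder-02} I would argue by induction on $\ell$. Having $\widetilde u\in C^{2+\beta,1+\frac{\beta}{2}}$, the coefficients $\widetilde a^{\xi\xi},\widetilde b$ are in fact $C^{\beta}$ with bounds on one more derivative once we move away from the corner $t=0$; differentiating \eqref{Eq-re} in $\xi$ (and using the fact that $M\subset\mathscr H^1(1)$ is $1$-dimensional so there are no curvature-commutator obstructions, $\sigma_{\xi\xi}\equiv 1$) shows $\widetilde u_\xi$ solves a linear parabolic equation of the same type with $C^\beta$ coefficients and a conormal boundary condition obtained by differentiating $\nabla_\mu\widetilde u=0$ along $\partial M$; the smoothness of $\mu$ (it is a fixed smooth vector field on $\Sigma$, independent of $r$) keeps the boundary operator admissible. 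Schauder estimates then give $\widetilde u\in C^{4+\beta,2+\frac{\beta}{2}}$ on $M\times[s_0,S]$, and iterating raises the order by two at each step, yielding \eqref{imfcone-holder-02} for every $\ell$ with the stated dependence on $\|u_0(\cdot,s_0)\|_{C^{2\ell+\beta,\ell+\frac{\beta}{2}}(M)}$. The main obstacle I anticipate is not any single estimate but the careful handling of the Neumann boundary condition at each stage of the bootstrap --- ensuring that differentiating the flow equation produces a boundary condition that is still covered by the linear oblique-derivative Schauder theory, and that the compatibility conditions between the (differentiated) initial data and the boundary operator hold so that no loss of regularity occurs at $\partial M\times\{0\}$; on the $1$-dimensional base $M$ this is manageable, but it is where the argument must be written with care rather than waved through.
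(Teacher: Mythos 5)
Your proposal follows essentially the same route as the paper: both first recast the rescaled equation \eqref{Eq-re} as a uniformly parabolic equation whose principal coefficient is bounded above and below (using the $C^0$, gradient, and $\widetilde k$-bound estimates) and whose coefficients are H\"older continuous by Lemma~\ref{lemma4.3}, then invoke linear parabolic Schauder theory with the conormal Neumann boundary condition to reach $C^{2+\beta,1+\frac{\beta}{2}}$, and finally differentiate the (now linearizable) equation and iterate away from $s=0$ to get the higher estimates \eqref{imfcone-holder-02}. The paper works the bootstrap on $\widetilde\varphi=\ln\widetilde u$ and explicitly rewrites the PDE in the form $\partial_s\widetilde u=\widetilde u^{-\alpha}\widetilde k^{-2}\Delta_{\widetilde g}\widetilde u+\text{lower order}$, whereas you argue at the level of $\widetilde u$ directly from the coefficient bounds, but these are cosmetic differences; your treatment of the compatibility issue at the corner (restricting to $[s_0,S]$) matches what the paper does.
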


\begin{proof}
By Lemma \ref{lemma2-1}, we have
$$uvk=1+\frac{1}{v^{2}}\varphi_{\xi\xi}=1+u^2 \Delta_g \varphi.$$
Since
$$u^2 \Delta_g \varphi=\widetilde{u}^2 \Delta_{\widetilde{g}} \widetilde{\varphi}=
-| \widetilde{\nabla} \widetilde{u}|^2+ \widetilde{u}
\Delta_{\widetilde{g}} \widetilde{u},$$ then
\begin{equation*}
\begin{split}
\frac{\partial \widetilde{u}}{\partial s}&=\frac{ \partial u}{\partial t} \Theta^{\alpha-1}+\widetilde{u}\\
&=\frac{uvk}{u^{1+\alpha}k^2} \Theta^{\alpha-1} - \frac{2v}{u^{\alpha}k} \Theta^{\alpha-1}+\widetilde{u}\\
&=\frac{\Delta_{\widetilde{g}}
\widetilde{u}}{\widetilde{u}^{\alpha}\widetilde{k}^2}-
\frac{2v}{\widetilde{u}^{\alpha}\widetilde{k}} + \widetilde{u}
+\frac{1-| \widetilde{\nabla}
\widetilde{u}|^2}{\widetilde{u}^{1+\alpha}\widetilde{k}^2},
\end{split}
\end{equation*}
which is a uniformly parabolic equation with H\"{o}lder continuous
coefficients. Therefore, the linear theory (see \cite[Chap.
4]{Lieb}) yields the inequality (\ref{imfcone-holder-01}).

Set $\widetilde{\varphi}=\ln \widetilde{u}$, and then the rescaled
version of the evolution equation in (\ref{Eq-re}) takes the form
\begin{equation*}
  \frac{\partial \widetilde{\varphi}}{\partial s}=- e^{-\alpha\widetilde{\varphi}}\frac{ v^2}{ 1+\frac{1}{v^2} \widetilde{\varphi}_{\xi\xi}}+1,
\end{equation*}
where $v=\sqrt{1-|D \widetilde{\varphi}|^2}$. According to the
$C^{2+\beta,1+\frac{\beta}{2}}$-estimate of $\widetilde{u}$ (see
Lemma \ref{holder}), we can treat the equations for
$\frac{\partial\widetilde{\varphi}}{\partial s}$ and
$D_\xi\widetilde{\varphi}$ as second-order linear uniformly
parabolic PDEs on $M\times [s_0,S]$. At the initial time $s_0$, all
compatibility conditions are satisfied and the initial function
$u(\cdot,t_0)$ is smooth. We can obtain a $C^{3+\beta,
\frac{3+\beta}{2}}$-estimate for $D_\xi \widetilde{\varphi}$ and a
$C^{2+\beta, \frac{2+\beta}{2}}$-estimate for
$\frac{\partial\widetilde{\varphi}}{\partial s}$ (the estimates are
independent of $T$) by Theorem 4.3 and Exercise 4.5 in \cite[Chapter
4]{Lieb}. Higher regularity can be proven by induction over $\ell$.
\end{proof}

\begin{theorem} \label{key-2}
Under the hypothesis of Theorem \ref{main1.1}, we conclude
\begin{equation*}
T^{*}=+\infty.
\end{equation*}
\end{theorem}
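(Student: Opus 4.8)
The plan is the standard continuation argument: I assume $T^{\ast}<+\infty$ and derive a contradiction by showing that the solution extends smoothly past $T^{\ast}$, violating the maximality built into the definition of $T^{\ast}$.

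First I would record an elementary but crucial observation: since $T^{\ast}$ is finite, the rescaling factor $\Theta(t,c)=\{-\alpha t+e^{\alpha c}\}^{1/\alpha}$ is bounded between two positive constants on $[0,T^{\ast}]$, and likewise $dt/ds=\Theta^{\alpha}$ is bounded above and below, so the rescaled time $S$ is finite precisely when $T^{\ast}$ is, and uniform estimates transfer back and forth between the flow and the rescaled flow. Consequently the a priori estimates of Sections \ref{se3} and \ref{se4} — the $C^{0}$ bound of Lemma \ref{estimate1}, the $\dot{\varphi}$ bound of Lemma \ref{estimate2}, the gradient bound $|D\varphi|\le\sup_{M}|D\varphi(\cdot,0)|<1$ of Lemma \ref{estimate3}, and the two-sided curvature bound $0<c_{3}\le k\Theta\le c_{4}$ — all yield uniform (in $t$) control of $u$, $1/u$, $|D\varphi|$, $k$ and $1/k$ on $M\times[0,T^{\ast})$. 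In particular the gradient stays strictly inside the light cone and the quantity $1+v^{-2}\varphi_{\xi\xi}=uvk$ stays pinched between positive constants, so \eqref{Evo-1} remains uniformly parabolic up to time $T^{\ast}$ with no degeneracy.

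Next I would invoke the H\"older estimate of Lemma \ref{lemma4.3} together with the higher-order estimates \eqref{imfcone-holder-01}--\eqref{imfcone-holder-02}, which are all independent of $T$, to conclude that $\widetilde{u}$ — and hence, after undoing the bounded space–time rescaling, $u$ itself — is bounded in $C^{\infty}(M\times[0,T^{\ast}])$ with all compatibility conditions satisfied at the parabolic corner. By the Arzel\`a--Ascoli theorem the maps $u(\cdot,t)$ then converge in $C^{\infty}(M)$ as $t\nearrow T^{\ast}$ to a limit $u(\cdot,T^{\ast})$ which is again a positive, strictly convex, spacelike $C^{\infty}$ graph over $M$ satisfying $\nabla_{\mu}u(\cdot,T^{\ast})=0$. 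Taking this limit as new initial data and applying the short-time existence result of Lemma \ref{Lemma2-2}, the flow \eqref{Eq1} (equivalently \eqref{Evo-1}) extends to $M\times[0,T^{\ast}+\delta)$ for some $\delta>0$, contradicting the maximality of $T^{\ast}$. Hence $T^{\ast}=+\infty$.

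The only genuine obstacle here is the one already resolved by the earlier estimates: preventing the equation from degenerating as $t\to T^{\ast}$, i.e.\ ruling out $|D\varphi|\to 1$ (loss of the spacelike/graphical property) and $k\to 0$ or $k\to\infty$ (loss of strict convexity or of parabolicity). Once these are in hand, together with the observation that $\Theta$ and $\Theta^{\alpha}$ are comparable to constants on a finite time interval so that finiteness of $T^{\ast}$ is equivalent to finiteness of the rescaled time, the continuation step is routine.
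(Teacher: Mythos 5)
Your proof is correct and follows exactly the continuation argument that the paper implicitly endorses: the paper omits the proof, citing \cite[Lemma 8]{Mar}, whose argument is precisely the one you reconstruct — uniform a priori estimates on $[0,T^{\ast})$ preclude degeneracy, so the solution extends smoothly to $t=T^{\ast}$, and short-time existence (Lemma \ref{Lemma2-2}) then contradicts the maximality of $T^{\ast}$. Your additional observation that $\Theta$ and $\Theta^{\alpha}$ are pinched between positive constants on a finite interval, so that estimates transfer between the original and rescaled flows, is a worthwhile point to make explicit and is consistent with how the earlier lemmas are stated.
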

\begin{proof}
The proof of this result is quite similar to the corresponding
argument in \cite[Lemma 8]{Mar} and so is omitted.
\end{proof}

\section{Convergence of the rescaled flow} \label{se6}

We know that after the long-time existence of the flow has been
obtained (see Theorem \ref{key-2}), the rescaled version of the
system (\ref{Evo-1}) satisfies
\begin{equation}
\left\{
\begin{aligned}
&\frac{\partial}{\partial
s}\widetilde{\varphi}=\widetilde{Q}(\widetilde{\varphi},D\widetilde{\varphi},
D^2\widetilde{\varphi})  \qquad &&\mathrm{in}~
M\times(0,\infty)\\
&\nabla_{\mu} \widetilde{\varphi}=0  \qquad &&\mathrm{on}~ \partial M\times(0,\infty)\\
&\widetilde{\varphi}(\cdot,0)=\widetilde{\varphi}_{0} \qquad
&&\mathrm{in}~M,
\end{aligned}
\right.
\end{equation}
where
\begin{equation*}
\widetilde{Q}(\widetilde{\varphi},D\widetilde{\varphi},D^2\widetilde{\varphi}):=-e^{-\alpha\widetilde{\varphi}}
\frac{v^2}{1+\frac{1}{v^2}\widetilde{\varphi}_{\xi\xi}}+1
\end{equation*}
and $\widetilde{\varphi}=\ln\widetilde{u}$. Similar to what has been
done in the $C^1$ estimate (see Lemma \ref{estimate3}), we can
deduce a decay estimate of $\widetilde{u}(\cdot, s)$ as follows.

\begin{lemma} \label{lemma5-1}
Let $u$ be a solution of \eqref{Eq2}, then we have
\begin{equation}\label{Gra-est1}
|D\widetilde{u}(x(\xi), t)|\leq\lambda\sup_{M}|D\widetilde{u}(\cdot,
0)|,
\end{equation}
where $\lambda$ is a positive constant depending on $c_{1}$ and
$c_{2}$.
\end{lemma}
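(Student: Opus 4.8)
The plan is to reproduce the argument of Lemma \ref{estimate3} for the rescaled equation. Recall from Lemma \ref{lemma4.2} that $D\widetilde\varphi=D\varphi$ and $D\widetilde u=\widetilde u\,D\widetilde\varphi$, where $\widetilde\varphi=\ln\widetilde u$, and from Lemma \ref{estimate1} that $c_1\le\widetilde u=u\Theta^{-1}\le c_2$ on $M\times[0,\infty)$; in particular $\widetilde u(\cdot,0)=\widetilde u_0\ge c_1$. So it is enough to bound $|D\widetilde\varphi|$ by its initial supremum, after which the passage to $|D\widetilde u|$ is immediate.

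First I would set $\widetilde\psi:=\tfrac12|D\widetilde\varphi|^2$ and differentiate the rescaled scalar equation $\partial_s\widetilde\varphi=\widetilde Q(\widetilde\varphi,D\widetilde\varphi,D^2\widetilde\varphi)=-e^{-\alpha\widetilde\varphi}v^2/(1+v^{-2}\widetilde\varphi_{\xi\xi})+1$. The additive constant contributes nothing under $\partial_\xi$, so, exactly as in the derivation of \eqref{gra}, one gets
\[
\frac{\partial\widetilde\psi}{\partial s}=\widetilde Q^{\xi\xi}\widetilde\psi_{\xi\xi}+\widetilde Q^\xi\widetilde\psi_\xi-\widetilde Q^{\xi\xi}\widetilde\varphi_{\xi\xi}^2+\widetilde Q^{\widetilde\varphi}\widetilde\varphi_\xi^2,
\]
with $\widetilde Q^{\widetilde\varphi}:=\partial\widetilde Q/\partial\widetilde\varphi=\alpha e^{-\alpha\widetilde\varphi}v^2/(1+v^{-2}\widetilde\varphi_{\xi\xi})$. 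Since the leaves stay strictly convex, $1+v^{-2}\widetilde\varphi_{\xi\xi}>0$, so with $\alpha<0$ this zeroth-order coefficient is non-positive; together with $\widetilde Q^{\xi\xi}>0$ this makes the last two terms $\le 0$, whence $\partial_s\widetilde\psi\le\widetilde Q^{\xi\xi}\widetilde\psi_{\xi\xi}+\widetilde Q^\xi\widetilde\psi_\xi$ in $M\times(0,\infty)$.

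On the boundary, $\nabla_\mu\widetilde\varphi=\mu^\xi\widetilde\varphi_\xi=0$ together with $\mu\ne 0$ (so that $\mu^\xi\partial_\xi$ spans $T_xM$ at each endpoint) forces $\widetilde\varphi_\xi=0$ on $\partial M\times(0,\infty)$, hence $\nabla_\mu\widetilde\psi=\mu^\xi\widetilde\varphi_{\xi\xi}\widetilde\varphi_\xi=0$ there. Applying the parabolic maximum principle as in Lemma \ref{estimate3} yields $|D\widetilde\varphi(x(\xi),s)|\le\sup_M|D\widetilde\varphi(\cdot,0)|$ for all $s\ge 0$. Combining this with $D\widetilde u=\widetilde u\,D\widetilde\varphi$, $\widetilde u\le c_2$, $\widetilde u_0\ge c_1$ and $D\widetilde\varphi(\cdot,0)=D\widetilde u(\cdot,0)/\widetilde u_0$ gives
\[
|D\widetilde u(x(\xi),s)|=\widetilde u\,|D\widetilde\varphi|\le c_2\sup_M|D\widetilde\varphi(\cdot,0)|=c_2\sup_M\frac{|D\widetilde u(\cdot,0)|}{\widetilde u_0}\le\frac{c_2}{c_1}\sup_M|D\widetilde u(\cdot,0)|,
\]
which is the asserted estimate with $\lambda=c_2/c_1$. (Alternatively, since $D\widetilde\varphi=D\varphi$ identically, the bound $|D\widetilde\varphi|\le\sup_M|D\varphi(\cdot,0)|$ is already contained in Lemma \ref{estimate3}, bypassing the parabolic recomputation entirely.)

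There is no serious difficulty here: the only delicate point is, as throughout the paper, handling the spatial boundary in the maximum principle, i.e. ruling out a maximum of $\widetilde\psi$ on $\partial M\times(0,\infty)$. This is exactly where the convexity of the cone bounded by $\Sigma$ and the property $\mu(x)=\mu(rx)\in T_xM$ enter, via the Hopf-lemma/reflection argument already invoked for Lemma \ref{estimate3} and detailed in \cite{GaoY2}.
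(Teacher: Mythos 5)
Your proposal reproduces the paper's argument: set $\widetilde\psi=\tfrac12|D\widetilde\varphi|^2$, derive the parabolic inequality $\partial_s\widetilde\psi\le\widetilde Q^{\xi\xi}\widetilde\psi_{\xi\xi}+\widetilde Q^\xi\widetilde\psi_\xi$ by discarding the non-positive terms $-\widetilde Q^{\xi\xi}\widetilde\varphi_{\xi\xi}^2$ and $2\alpha(1-\widetilde Q)\widetilde\psi$ (your $\widetilde Q^{\widetilde\varphi}\widetilde\varphi_\xi^2$), apply the maximum principle with the zero Neumann condition, and convert via $D\widetilde u=\widetilde u\,D\widetilde\varphi$ and $c_1\le\widetilde u\le c_2$ to get $\lambda=c_2/c_1$. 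Your parenthetical shortcut---that $D\widetilde\varphi=D\varphi$ makes the recomputation redundant since Lemma \ref{estimate3} already supplies the bound---is also correct and arguably the cleaner route, though the paper opts to redo the computation for the rescaled quantities.
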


\begin{proof}
Set $\widetilde{\psi}=\frac{|D \widetilde{\varphi}|^2}{2}$. Similar
to the argument in Lemma \ref{estimate3}, we can obtain
\begin{equation*}\label{gra--}
\frac{\partial \widetilde{\psi}}{\partial s}=\widetilde{Q}^{\xi\xi}
\widetilde{\psi}_{\xi\xi}+\widetilde{Q}^\xi \widetilde{\psi}_\xi
-\widetilde{Q}^{\xi\xi}\widetilde{\varphi}^{2}_{\xi\xi}+2\alpha(1-\widetilde{Q})\widetilde{\psi},
\end{equation*}
 with the boundary condition
\begin{equation*}
\begin{aligned}
D_ \mu \widetilde{\psi}=0.
\end{aligned}
\end{equation*}
So we have
\begin{equation*}
\left\{
\begin{aligned}
&\frac{\partial \widetilde{\psi}}{\partial s}\leq
\widetilde{Q}^{\xi\xi}\widetilde{\psi}_{\xi\xi}+\widetilde{Q}^\xi\widetilde{\psi}_\xi
\quad &&\mathrm{in}~
M\times(0,\infty)\\
&D_\mu \widetilde{\psi} = 0 \quad &&\mathrm{on}~\partial M\times(0,\infty)\\
&\psi(\cdot,0)=\frac{|D\widetilde{\varphi}(\cdot,0)|^2}{2}
\quad&&\mathrm{in}~M.
\end{aligned}\right.
\end{equation*}
Using the maximum principle and Hopf's lemma, we can get the
gradient estimates of $\widetilde{\varphi}$, and then the inequality
(\ref{Gra-est1}) follows from the relation between
$\widetilde{\varphi}$ and $\widetilde{u}$.
\end{proof}

\begin{lemma}\label{rescaled flow}
Let $u$ be a solution of the flow \eqref{Eq2}. Then,
\begin{equation*}
\widetilde{u}(\cdot, s)
\end{equation*}
converges to a real number as $s\rightarrow +\infty$.
\end{lemma}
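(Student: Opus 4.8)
The plan is to show that $\widetilde u(\cdot,s)$ converges to a \emph{constant} function as $s\to\infty$ by combining the a priori estimates already obtained with a monotonicity/energy argument. First I would note that by the higher-order estimates \eqref{imfcone-holder-01}--\eqref{imfcone-holder-02} the rescaled solution $\widetilde u$ is bounded in $C^{2\ell+\beta,\ell+\frac{\beta}{2}}$ for every $\ell$, uniformly in $s$, so any sequence $s_i\to\infty$ has a subsequence along which $\widetilde u(\cdot,s_i)$ converges smoothly; the task is to identify the limit and to upgrade subsequential convergence to full convergence. Recall that $\widetilde\varphi=\ln\widetilde u$ solves
\begin{equation*}
\frac{\partial\widetilde\varphi}{\partial s}=-e^{-\alpha\widetilde\varphi}\frac{v^2}{1+\frac{1}{v^2}\widetilde\varphi_{\xi\xi}}+1,\qquad \nabla_\mu\widetilde\varphi=0,
\end{equation*}
and a spatially constant solution $\widetilde\varphi\equiv c_\infty$ corresponds exactly to the stationary equation $e^{-\alpha c_\infty}=1$, i.e. $c_\infty=0$ after the rescaling is normalized — but more usefully, the constants are precisely the equilibria, so the claim ``$\widetilde u(\cdot,s)\to$ a real number'' is the assertion of convergence to an equilibrium.

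The key step is a Lyapunov-type estimate. I would test the evolution equation for $\widetilde\varphi$ against $\widetilde\varphi_s$ (or work with $\int_M \widetilde\psi\, d\xi$ where $\widetilde\psi=\tfrac12|D\widetilde\varphi|^2$ as in Lemma~\ref{lemma5-1}) to produce a quantity that is monotone in $s$ and whose time-derivative controls $\|\widetilde\varphi_s\|_{L^2}$ or $\|D\widetilde\varphi\|_{L^2}$. Concretely, from the parabolic inequality for $\widetilde\psi$ derived in the proof of Lemma~\ref{lemma5-1}, integrating over $M$ and using the Neumann condition $D_\mu\widetilde\psi=0$ to kill the boundary term gives
\begin{equation*}
\frac{d}{ds}\int_M\widetilde\psi\,d\xi \le -\int_M \widetilde Q^{\xi\xi}\widetilde\varphi_{\xi\xi}^2\,d\xi + 2\alpha\int_M(1-\widetilde Q)\widetilde\psi\,d\xi,
\end{equation*}
and since $\widetilde Q^{\xi\xi}>0$, $\alpha<0$, and $1-\widetilde Q$ has a sign that can be pinned down from the $\dot\varphi$-estimate (Lemma~\ref{estimate2}) after rescaling, the right-hand side is $\le -c\int_M\widetilde\psi\,d\xi$ for some $c>0$ once $s$ is large, whence $\int_M|D\widetilde\varphi|^2\,d\xi\to 0$ exponentially. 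Interpolating this $L^2$ decay of $D\widetilde\varphi$ against the uniform higher-order bounds yields $|D\widetilde u(\cdot,s)|\to 0$ uniformly, so every subsequential smooth limit is spatially constant. Then I would feed this back into the equation: $\widetilde\varphi_s = 1-e^{-\alpha\widetilde\varphi}+o(1)$, an ODE-like relation showing $\widetilde\varphi(\cdot,s)$ (which is asymptotically $x$-independent) converges monotonically to the unique zero of $1-e^{-\alpha\tau}$, giving full (not merely subsequential) convergence of $\widetilde u(\cdot,s)$ to a real number $r_\infty$.

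The main obstacle I expect is making the energy/monotonicity argument genuinely produce \emph{decay} rather than mere boundedness, and then passing from decay in an integral norm to the honest uniform convergence of $\widetilde u$ itself (as opposed to along subsequences). The first difficulty is handled by carefully exploiting the signs: $\alpha<0$, the positivity of $\widetilde Q^{\xi\xi}$ from the gradient and curvature bounds, and the two-sided bound on $\dot{\widetilde\varphi}$ coming from Lemma~\ref{estimate2}; together these force a strict dissipation inequality. The second is handled by the uniform $C^{2\ell+\beta,\ell+\frac{\beta}{2}}$ bounds: once $\int_M|D\widetilde\varphi|^2 ds < \infty$ over $[1,\infty)$ and the integrand has bounded derivative in $s$, one gets $\int_M|D\widetilde\varphi|^2\to 0$, hence by interpolation $\|D\widetilde u(\cdot,s)\|_{C^1}\to 0$; combined with the scalar ODE asymptotics for the (now nearly constant) quantity $\widetilde\varphi$ this yields a genuine limit. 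The estimate \eqref{radius} for $r_\infty$ then follows by combining this limit with the $C^0$ bounds of Lemma~\ref{estimate1} and the conservation-type identity relating $\widetilde u$ to the ratio of lengths $\mathcal L(M_0)/\mathcal L(M)$.
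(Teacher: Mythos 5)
Your proposal takes a genuinely different route from the paper's. The paper's argument is built around the first variation of the length functional: setting $f(t)=\mathcal H^1(M_t)$ and using $-\operatorname{div}_{M_t}\nu=k$ together with the first variation formula, it derives the identity $f'(t)=-\int_{M_t}|u|^{-\alpha}\,d\mathcal H^1$, sandwiches $f(t)$ by integrating the resulting differential inequalities with the $C^0$ barriers of Lemma~\ref{estimate1}, and concludes that $\mathcal H^1(\widetilde M_s)$ stays between two constants independent of $s$; combining this with \eqref{imfcone-holder-01}, Lemma~\ref{lemma5-1} and Arzel\`a--Ascoli yields the convergence and, as a byproduct, the quantitative radius estimate \eqref{radius}. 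You instead run a Lyapunov/energy argument on $\int_M\widetilde\psi\,d\xi$ with $\widetilde\psi=\tfrac12|D\widetilde\varphi|^2$, aim for exponential decay of $\|D\widetilde\varphi\|_{L^2}$, and close with an ODE-asymptotics step.

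There is a gap in the dissipation inequality at the core of your plan. Integrating $\widetilde\psi_s=\widetilde Q^{\xi\xi}\widetilde\psi_{\xi\xi}+\widetilde Q^\xi\widetilde\psi_\xi-\widetilde Q^{\xi\xi}\widetilde\varphi_{\xi\xi}^2+2\alpha(1-\widetilde Q)\widetilde\psi$ over $M$ does \emph{not} give
\begin{equation*}
\frac{d}{ds}\int_M\widetilde\psi\,d\xi\ \le\ -\int_M\widetilde Q^{\xi\xi}\widetilde\varphi_{\xi\xi}^2\,d\xi+2\alpha\int_M(1-\widetilde Q)\widetilde\psi\,d\xi.
\end{equation*}
After integrating by parts (using $\widetilde\psi_\xi=0$ on $\partial M$ from the Neumann condition) the two transport-type terms leave a remainder $-\int_M\bigl((\widetilde Q^{\xi\xi})_\xi-\widetilde Q^\xi\bigr)\widetilde\psi_\xi\,d\xi$ with no definite sign. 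Since $\widetilde\psi_\xi=\widetilde\varphi_{\xi\xi}\widetilde\varphi_\xi$ one can Young it into $\epsilon\int_M\widetilde\varphi_{\xi\xi}^2+C(\epsilon)\int_M\widetilde\psi$, but $C(\epsilon)$ depends on the merely bounded (not small) quantities $(\widetilde Q^{\xi\xi})_\xi$ and $\widetilde Q^\xi$, so nothing forces $C(\epsilon)<2|\alpha|\inf(1-\widetilde Q)$; the strict dissipation inequality and the claimed exponential decay do not follow from the sign observations you list, and would need a substantially more careful estimate. Finally, your last step invokes a ``conservation-type identity relating $\widetilde u$ to the ratio of lengths'' without deriving it — that identity \emph{is} the first-variation computation $f'(t)=-\int_{M_t}|u|^{-\alpha}\,d\mathcal H^1$ that the paper carries out, and without it your argument does not produce the explicit bound \eqref{radius} on $r_\infty$.
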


\begin{proof}
Set $f(t):= \mathcal{H}^1(M_{t})$, which, as before, represents the
$1$-dimensional Hausdorff measure of $M_{t}$ and is actually the
length of $M_{t}$. The corresponding past-directed timelike unit
normal vector is given by
\begin{eqnarray*}
\nu = \displaystyle{-\frac{1}{v}}\displaystyle{\left(
\frac{u_{\xi}}{u^{2}}\partial_{1} + \partial_{0}\right)},
\end{eqnarray*}
where $v=\sqrt{1 - u^{-2}\left|Du\right|^{2}}$, and the unit normal
vector $\nu$ can be written as
$\nu=\nu^{1}\partial_{1}+\nu^{0}\partial_{0}$ w.r.t. the basis
$\{\partial_{0}=\partial_{r},\partial_{1}=\partial_{\xi}\}$. Then
\begin{eqnarray*}
-\mbox{div}_{M_{t}}\nu = -\left(\frac{\partial\nu^{1}}{\partial
\xi}+\nu^{1}\overline{\Gamma}^{1}_{11}+\nu^{0}\overline{\Gamma}^{1}_{01}\right),
\end{eqnarray*}
 with $\overline{\Gamma}_{IJ}^{K}$ the Christoffel symbols of
$\mathbb{R}_{1}^{n+1}$ w.r.t. the basis
$\{\partial_{0},\partial_{1}\}$. By Lemma \ref{lemma2-1}, we can
obtain
\begin{eqnarray*}
-\mbox{div}_{M_{t}}\nu = \frac{u_{\xi\xi}u
+u^{2}-2u_{\xi}^{2}}{u^{3}v^{3}}=k ,
\end{eqnarray*}
and according to the first variation of a submanifold (see, e.g.,
\cite{ls}), we have
\begin{equation}\label{imcf-crf-for-01}
\begin{split}
f'(t)&=\int_{M_{t}} \mbox{div}_{M_{t}} \left( \frac{\nu}{|X|^{\alpha} k}\right) d\mathcal{H}^1\\
&=\int_{M_{t}} \left\langle \nabla_{e_\xi}\left(\frac{\nu}{|X|^{\alpha} k}\right), e_\xi\right\rangle d\mathcal{H}^1\\
&=-\int_{M_{t}}  |u|^{-\alpha} d\mathcal{H}^{1},
\end{split}
\end{equation}
where $\{e_{\xi}\}$ is an orthonormal basis of the tangent bundle
$TM_{t}$ (i.e., $e_{\xi}=X_{\xi}/|X_{\xi}|$ with $X_{\xi}$ defined
as in Lemma \ref{lemma2-1}). We know that (\ref{C0}) implies
$$\left(-\alpha t+ e^{\alpha \varphi_1}\right)^{-1}\leq u^{-\alpha}\leq \left(-\alpha t+ e^{\alpha \varphi_2}\right)^{-1},$$
where $\varphi_1=\inf_{M^1} \varphi(\cdot,0)$ and
$\varphi_2=\sup_{M^1} \varphi(\cdot,0)$. Hence
$$-\left(-\alpha t+ e^{\alpha \varphi_2}\right)^{-1} f(t)\leq f'(t) \leq -\left(-\alpha t+ e^{\alpha \varphi_1}\right)^{-1}f(t).$$
Combining this fact with (\ref{imcf-crf-for-01}) yields
 \begin{eqnarray*}
\frac{(-\alpha t+ e^{\alpha \varphi_2})^{\frac{1}{\alpha}}
\mathcal{H}^1(M_{0})}{e^{\varphi_2}} \leq f(t)\leq \frac{(-\alpha t+
e^{\alpha \varphi_1})^{\frac{1}{\alpha}} \mathcal{H}^1(M_{0})}{
e^{\varphi_1}}.
 \end{eqnarray*}
 Therefore, the rescaled
hypersurface $\widetilde{M}_s=M_{t} \Theta^{-1}$ satisfies the
following inequality
 \begin{eqnarray*}
\frac{ \mathcal{H}^1(M_{0})}{e^{ \varphi_2}} \leq
\mathcal{H}^1(\widetilde{M}_s)\leq \frac{ \mathcal{H}^1(M_{0})}{ e^{
\varphi_1}},
 \end{eqnarray*}
 which implies that the area
of  $\widetilde{M}_s$ is bounded and the bounds are independent of
$s$. Together with (\ref{imfcone-holder-01}), Lemma \ref{lemma5-1}
and the Arzel\`{a}-Ascoli theorem, we conclude that
$\widetilde{u}(\cdot,s)$ must converge in $C^{\infty}(M)$ to a
constant function $r_{\infty}$ with
\begin{eqnarray*}
\frac{1}{e^{\varphi_{2}}}\left(\frac{\mathcal{H}^1(M_{0})}{\mathcal{H}^1(M)}\right)\leq
r_{\infty}
\leq\frac{1}{e^{\varphi_{1}}}\left(\frac{\mathcal{H}^1(M_{0})}{\mathcal{H}^1(M)}\right),
\end{eqnarray*}
which implies the radius estimate (\ref{radius}).
\end{proof}

So, we have
\begin{theorem}\label{rescaled flow}
The rescaled flow
\begin{equation*}
\frac{d
\widetilde{X}}{ds}=\frac{1}{|\widetilde{X}|^{\alpha}\widetilde{k}}\nu+\widetilde{X}
\end{equation*}
exists for all time and the leaves converge in $C^{\infty}$ to a
piece of hyperbolic plane of center at origin and radius
$r_{\infty}$, i.e., a piece of $\mathscr{H}^{1}(r_{\infty})$, where
$r_{\infty}$ satisfies (\ref{radius}).
\end{theorem}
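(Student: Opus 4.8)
The plan is to assemble Theorem~\ref{rescaled flow} from the pieces already established, since at this point the hard analytic work is done. First I would invoke Theorem~\ref{key-2} to guarantee $T^{*}=+\infty$, so that the reparametrization $t=t(s)$ defined by $dt/ds=\Theta^{\alpha}$ maps $[0,\infty)$ onto $[0,\infty)$ (one checks that $\int_{0}^{\infty}\Theta^{-\alpha}\,dt=+\infty$ for $\alpha<0$, using $\Theta(t,c)=\{-\alpha t+e^{\alpha c}\}^{1/\alpha}$), so the rescaled flow \eqref{Eq-re} is genuinely defined for all $s\in[0,\infty)$. The evolution of $\widetilde{X}$ is obtained by differentiating $\widetilde{X}=X\Theta^{-1}$ and using the relation $dt/ds=\Theta^{\alpha}$ together with $\frac{d}{dt}\ln\Theta=-\Theta^{-\alpha}$; this yields exactly $\frac{d\widetilde{X}}{ds}=|\widetilde{X}|^{-\alpha}\widetilde{k}^{-1}\nu+\widetilde{X}$, since $|\widetilde{X}|=|X|\Theta^{-1}$ and $\widetilde{k}=k\Theta$ by Lemma~\ref{lemma4.2}.

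Next I would address the convergence of the geometry. By Lemma~\ref{rescaled flow} (the lemma), $\widetilde{u}(\cdot,s)\to r_{\infty}$ in $C^{\infty}(M)$ as $s\to\infty$, with $r_{\infty}$ satisfying the bounds in \eqref{radius}; the uniform higher-order estimate \eqref{imfcone-holder-02}, valid on $M\times[s_{0},S]$ for every $\ell$ with constants independent of $S$, feeds the Arzel\`a--Ascoli theorem and shows the convergence is smooth and that all $s$-derivatives of $\widetilde{u}$ tend to zero, so the limit is a genuine stationary solution of \eqref{Eq-re}. Translating back through $\widetilde{X}(x,s)=(x,\widetilde{u}(x,s))$ and recalling that the constant function $r_{\infty}$ over $M\subset\mathscr{H}^{1}(1)$ corresponds precisely to the curve segment $\mathscr{H}^{1}(r_{\infty})=\{r_{\infty}x\mid x\in\mathscr{H}^{1}(1)\}$, one concludes that the leaves $\widetilde{M}_{s}$ converge in $C^{\infty}$ to a piece of $\mathscr{H}^{1}(r_{\infty})$.

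The one point requiring genuine care — and the step I would flag as the main obstacle — is verifying that the limit configuration is compatible with the Neumann boundary condition and that the convergence is uniform up to $\partial M$, not merely on interior compact subsets. For this I would note that the boundary estimates in Lemmas~\ref{lemma4.3} and the higher-order lemma were proved up to the boundary (the boundary integrals vanish because $\nabla_{\mu}\widetilde{\varphi}=0$, as used repeatedly in the integration-by-parts arguments), so the $C^{\infty}$ bounds on $\widetilde{u}$ hold on all of $M\times[s_{0},\infty)$; Arzel\`a--Ascoli then gives subsequential convergence, and Lemma~\ref{rescaled flow} pins down the limit uniquely as the constant $r_{\infty}$, forcing the full sequence to converge. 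Since the constant function trivially satisfies $\nabla_{\mu}r_{\infty}=0$, the boundary condition passes to the limit. Finally I would quote the radius bounds \eqref{radius} verbatim from Lemma~\ref{rescaled flow}, which completes the proof; the whole argument is essentially bookkeeping, so I would present it in a few lines citing the relevant lemmas.
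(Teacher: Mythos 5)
Your proposal is correct and fills in precisely the bookkeeping the paper leaves implicit: the paper offers no explicit proof for this theorem, simply presenting it as a direct consequence of Theorem~\ref{key-2} and Lemma~\ref{rescaled flow} (the lemma). Your derivation of the rescaled equation $\frac{d\widetilde{X}}{ds}=|\widetilde{X}|^{-\alpha}\widetilde{k}^{-1}\nu+\widetilde{X}$ via $\widetilde{X}=X\Theta^{-1}$, $\frac{d}{dt}\ln\Theta=-\Theta^{-\alpha}$, $dt/ds=\Theta^{\alpha}$, and the scaling relations of Lemma~\ref{lemma4.2} checks out, as does the observation that $\int_0^\infty\Theta^{-\alpha}\,dt=\infty$ so that $s\to\infty$ as $t\to\infty$; and the convergence claim is exactly Lemma~\ref{rescaled flow} together with the uniform higher-order estimates.
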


\vspace{5mm}

\section*{Acknowledgments}
This work is partially supported by the NSF of China (Grant Nos.
11801496 and 11926352), the Fok Ying-Tung Education Foundation
(China) and  Hubei Key Laboratory of Applied Mathematics (Hubei
University).

\vspace {5mm}

\end{document}